\providecommand\@dotsep{5}
\def\listtodoname{List of Todos}
\def\listoftodos{\@starttoc{tdo}\listtodoname}
\numberwithin{equation}{section}
\def\dis{\displaystyle}
\def\R {{\rm I}\hskip -0.85mm{\rm R}}
\def\N {{\rm I}\hskip -0.85mm{\rm N}}
\def\dis{\displaystyle}
\newtheorem{theorem}{Theorem}[section]
\newtheorem{proposition}[theorem]{Proposition}
\newtheorem{lemma}[theorem]{Lemma}
\title[Multiple solutions for a generalized Schr\"{o}dinger problem]
{Multiple solutions for a generalized Schr\"{o}dinger problem with ``concave-convex'' nonlinearities}
\author[A. V. Santos]{Andrelino V. Santos}
\author[J. R. Santos Jr.]{Jo\~ao R. Santos J\'unior$^\ast$}
\address[A. V. Santos]{\newline\indent Faculdade de Matem\'atica
\newline\indent 
Instituto de Ci\^{e}ncias Exatas e Naturais
\newline\indent 
Universidade Federal do Par\'a
\newline\indent
Avenida Augusto corr\^{e}a 01, 66075-110, Bel\'em, PA, Brazil}
\email{\href{mailto: andrellino77@gmail.com}{andrellino77@gmail.com}}
\address[J. R. Santos Jr.]{\newline\indent Faculdade de Matem\'atica
\newline\indent 
Instituto de Ci\^{e}ncias Exatas e Naturais
\newline\indent 
Universidade Federal do Par\'a
\newline\indent
Avenida Augusto corr\^{e}a 01, 66075-110, Bel\'em, PA, Brazil}
\email{\href{mailto: joaojunior@ufpa.br }{joaojunior@ufpa.br}}
\thanks{$\ast$ Corresponding author: Jo\~ao R. Santos J\'unior was partially supported by CNPq-Proc. 302698/2015-9 and CAPES-Proc. 88881.120045/2016-01, Brazil.}
\subjclass[2000]{ 35J10, 35J25, 35J60.}
\keywords{Generalized Schr\"{o}dinger elliptic problems, multiplicity of solutions, variational methods}
\begin{document}

\maketitle
\begin{abstract}
A class of generalized Schr\"{o}dinger elliptic problems involving concave-convex and other types of nonlinearities is studied. A reasonable overview about the set of solutions is provided when the parameters involved in the equation assume different real values. 

\end{abstract}
\maketitle

%------------------------------------------------------------------------------
\section{Introduction}

%------------------------------------------------------------------------------

We are interested in investigating the following classes of stationary generalized Schr\"{o}dinger problems
\begin{equation}\label{P}\tag{$P_{\lambda, \mu, q, p}$}
\left \{ \begin{array}{ll}
-div( \vartheta(u)\nabla u)+\frac{1}{2}\vartheta'(u)|\nabla u|^{2}= \lambda |u|^{q-2}u +\mu |u|^{p-2}u& \mbox{in $\Omega$,}\\
u=0 & \mbox{on $\partial\Omega$,}
\end{array}\right.
\end{equation}
where $\Omega\subset\R^{N}$, $N\geq 3$, is a bounded smooth domain, $1<q<4$, $\max\{2, q\}<p<22^{\ast}$, $\lambda$ and $\mu$ are real parameters and $\vartheta:\R\to[1,\infty)$ is a general even $C^{1}$-function whose hypothesis will be posteriorly mentioned.

\medskip

When $\Omega=\R^{N}$, equation \eqref{P} is related to the existence of solitary wave solutions for the parabolic quasilinear Schr\"{o}dinger equation
\begin{equation}\label{evolution}
i\partial_{t}z=-\Delta z+V(x)z-\rho(|z|^{2})z-\Delta(l(|z|^{2}))l'(|z|^{2})z, \ x\in\R^{N},
\end{equation}
where $z:\R\times\R^{N}\to\mathds{C}$, $V:\R^{N}\to\R$ is a given potential and $l, \rho$ are real functions. Equation \eqref{evolution} appears naturally as a model for several physical phenomena, depending on the type of function $l$ considered. In fact, if $l(s)=s$, \eqref{evolution} describes the behavior of a superfluid film in plasma physics, see \cite{Kur}. For $l(s)=(1+s)^{1/2}$, \eqref{evolution} models the self-channeling of a high-power ultrashort laser in matter, see \cite{BG, BMMLB, CS, LSS}. Furthermore, \eqref{evolution} also appears in plasma physics and fluid mechanics \cite{LS}, in dissipative quantum mechanics \cite{Has}, in the theory of Heisenberg ferromagnetism and magnons \cite{QC} and in condensed matter theory \cite{MF}.

\medskip

If we take $z(t, x)=e^{-iEt}u(x)$ in \eqref{evolution}, we get the corresponding steady state equation
\begin{equation}\label{schro}
-\Delta u+V(x)u-\Delta(l(u^{2}))l'(u^{2})u= \rho(u) \ \mbox{in $\R^{N}$}.
\end{equation}
In the case that $\rho(s)=\lambda |s|^{q-2}s +\mu |s|^{p-2}s$ and $\R^{N}$ is replaced by $\Omega$, problem \eqref{schro} can be obtained from \eqref{P}, simply by choosing $\vartheta(s)=1+(l(s^{2})')^{2}/2$, for some $C^{2}$-function $l$. 

\medskip

Many authors have studied stationary Schr\"{o}dinger problems like \eqref{P} under different nonlinearities and functions $\vartheta$, when $\Omega=\R^{N}$. Without any intention to provide a complete overview about the matter, we just refer the reader to some seminal contributions: In the case $\vartheta(s)=1+2s^{2}$, 
see \cite{CJ, DMS, DMO, DPY, LWW, PSW, SV, YWA}. In the case $\vartheta(s)=1+s^{2}/2(1+s^{2})$, see \cite{DHS, ShW1, ShW2}.

\medskip

The main goal of the present paper is provide a reasonable outline about the existence of multiple solutions for problem \eqref{P}, when the parameters involved assume different values and function $\vartheta$ satisfies general conditions which cover some of the cases previously mentioned. More specifically, we are assuming that:
\begin{enumerate}
\item[\underline{$(\vartheta_1)$}] $s\mapsto \vartheta(s)$ is decreasing in $(-\infty, 0)$ and increasing in $(0, \infty)$;
\item[\underline{$(\vartheta_2)$}] $s\mapsto \vartheta(s)/s^{2}$ nondecreasing in $(-\infty, 0)$ and nonincreasing in $(0, \infty)$;
\item[\underline{$(\vartheta_3)$}] $\lim_{|s|\to \infty}\vartheta(s)/s^{2}=\alpha^{2}/2$, for some $\alpha>0$.
\end{enumerate}

Some examples of functions satisfying $(\vartheta_{1})-(\vartheta_{3})$ can be given by: 
$$
\vartheta_{\ast}(s)=1+2s^{2}, \ \vartheta_{\#}(s)=1+\frac{s^{2}}{2(1+s^{2})}+s^{2} \ \mbox{and} \ \vartheta_{\dagger}(s)=1+\ln(1+e^{s^{2}}),
$$
other examples can be found in \cite{SSS}, where the authors consider the problem \eqref{P} with power type nonlinearities.

\medskip

Due to the nature of the generalized Schr\"{o}dinger operator, some interesting phenomena can be observed when one compares \eqref{P} to the classical concave-convex problem involving the laplacian operator. For example, results of existence of infinitely many solutions with ``high energy'', commonly influenced by convex part of the nonlinearity, are just occurring when $p>4$. Moreover, multiplicity of solutions with ``low energy'' has been obtained for values of $q$ that are not in the interval  $(1, 2)$. More specifically, what is noticed is the existence of a ``grey zone'', namely, $2\leq q<p\leq 4$, where the set of solutions has an intermediate behaviour, presenting simultaneously influence of both powers as well as of the length of $\lambda$ and $\mu$, see Theorem \ref{teor3}. Our main results are as follows:

\begin{theorem}\label{teor1}
The following claims hold:
\begin{enumerate}
\item[$(i)$] If $\lambda, \mu\leq 0$, then \eqref{P} does not have any nontrivial solution;

\item[$(ii)$] Suppose that $\vartheta$ satisfies $(\vartheta_{1})-(\vartheta_{2})$, $1<q\leq 2$ and $p\geq 4$ hold. If $\lambda<0$, then \eqref{P} does not have solutions $u$ satisfying $J_{\lambda, \mu}(f^{-1}(u))\leq 0$. Analogously, if $\mu<0$, then \eqref{P} does not have solutions $u$ satisfying $J_{\lambda, \mu}(f^{-1}(u))\geq 0$;

\item[$(iii)$] Suppose that $\vartheta$ satisfies $(\vartheta_{1})-(\vartheta_{3})$. If $\max\{2, q\}< p\leq 4$ and $\lambda<0$, then there exists $\mu_{\ast}>0$ such that \eqref{P} does not have any nontrivial solution, whatever $\mu\in (0, \mu_{\ast})$. Moreover, if $1<q<2<p\leq 4$ and $\lambda>0$, then there exists $s_{\ast}>0$ such that \eqref{P} does not have solutions $u$ satisfying $J_{\lambda, \mu}(f^{-1}(u))\geq 0$, whatever $\mu\in (-s_{\ast}, s_{\ast})$.

\item[$(iv)$] Suppose that $\vartheta$ satisfies $(\vartheta_{1})-(\vartheta_{3})$. If $2\leq q< 4$ and $\mu<0$, then there exists $\lambda_{\ast}>0$ such that \eqref{P} does not have any nontrivial solution, whatever $\lambda\in (0, \lambda_{\ast})$. Moreover, if $2\leq q<p\leq 4$ and $\mu>0$, then there exists $t_{\ast}>0$ such that \eqref{P} does not have solutions $u$ satisfying $J_{\lambda, \mu}(f^{-1}(u))\leq 0$, whatever $\lambda\in (-t_{\ast}, t_{\ast})$.

\item[$(v)$] Suppose that $\vartheta$ satisfies $(\vartheta_{1})-(\vartheta_{3})$. If $2\leq q<p\leq 4$, then there exist $r_{\ast}>0$ such that \eqref{P} does not have any nontrivial solution, whatever $\lambda, \mu\in (-r_{\ast}, r_{\ast})$.
\end{enumerate}
\end{theorem}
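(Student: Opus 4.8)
The plan is to pass to the dual variable that linearizes the principal part. Let $f$ be the solution of $f'(t)=\vartheta(f(t))^{-1/2}$, $f(0)=0$; since $\vartheta$ is even, $f$ is an odd increasing diffeomorphism of $\R$ with $0<f'\le 1$, and the substitution $u=f(v)$ transforms \eqref{P} into the semilinear equation $-\Delta v=\lambda|f(v)|^{q-2}f(v)f'(v)+\mu|f(v)|^{p-2}f(v)f'(v)$, whose weak solutions are exactly the critical points in $H^1_0(\Omega)$ of
\begin{equation*}
J_{\lambda,\mu}(v)=\frac12\int_\Omega|\nabla v|^2-\frac\lambda q\int_\Omega|f(v)|^q-\frac\mu p\int_\Omega|f(v)|^p .
\end{equation*}
Hence $u$ solves \eqref{P} if and only if $v=f^{-1}(u)$ is a critical point, which is precisely the object $J_{\lambda,\mu}(f^{-1}(u))$ in the statement. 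Writing $A=\int_\Omega|f(v)|^q$, $B=\int_\Omega|f(v)|^p$, $a=\int_\Omega|f(v)|^{q-2}f(v)f'(v)v$ and $b=\int_\Omega|f(v)|^{p-2}f(v)f'(v)v$, the identity $\langle J_{\lambda,\mu}'(v),v\rangle=0$ reads $\|\nabla v\|_2^2=\lambda a+\mu b$, and a direct computation gives the energy splitting $J_{\lambda,\mu}(v)=\lambda\big(\tfrac a2-\tfrac Aq\big)+\mu\big(\tfrac b2-\tfrac Bp\big)$.

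The second ingredient is a set of pointwise inequalities for $f$ extracted from the hypotheses. Condition $(\vartheta_1)$ makes $f$ concave on $(0,\infty)$, whence $tf'(t)\le f(t)$; condition $(\vartheta_2)$ makes $t\mapsto f(t)f'(t)$ nondecreasing, whence $f(t)\le 2tf'(t)$. Since $f(v)v\ge0$ and $f'>0$, integrating these yields the two-sided bounds $\tfrac12 A\le a\le A$ and $\tfrac12 B\le b\le B$, with $a,b\ge0$. Finally, $(\vartheta_3)$ forces the half-power growth $|f(t)|\le C|t|^{1/2}$ (while always $|f(t)|\le|t|$), so that by the Sobolev embedding one gets simultaneously the ``sharp'' estimates $A\le C_1\|\nabla v\|_2^{q/2}$, $B\le C_2\|\nabla v\|_2^{p/2}$ and the ``crude'' estimates $A\le S_q\|\nabla v\|_2^{q}$, $B\le S_p\|\nabla v\|_2^{p}$.

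With these tools the first parts are short. For $(i)$, $a,b\ge0$ and $\lambda,\mu\le0$ give $\|\nabla v\|_2^2=\lambda a+\mu b\le0$, so $v\equiv0$. For $(ii)$ with $\lambda<0$ one may assume $\mu>0$ (otherwise $(i)$ applies); then $q\le2$ and $a\le A$ give $\tfrac a2-\tfrac Aq\le(\tfrac12-\tfrac1q)A\le0$, so the $\lambda$-term in the splitting is $\ge0$, while $p\ge4$ and $b\ge\tfrac12 B$ give $\tfrac b2-\tfrac Bp\ge(\tfrac14-\tfrac1p)B\ge0$, so the $\mu$-term is $\ge0$; thus $J_{\lambda,\mu}(v)\ge0$, strictly for $v\not\equiv0$ (at an endpoint one invokes strict monotonicity so that one inequality is strict when $A,B>0$). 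The case $\mu<0$ is symmetric, yielding $J_{\lambda,\mu}(v)\le0$. For the small-parameter statements $(iii)$–$(v)$ the engine is the following dichotomy applied to a hypothetical nontrivial critical point. From $\|\nabla v\|_2^2=\lambda a+\mu b\le|\lambda|A+|\mu|B$ and the sharp estimates one obtains, after dividing by $\|\nabla v\|_2^2$, the inequality $1\le|\lambda|C_1\|\nabla v\|_2^{q/2-2}+|\mu|C_2\|\nabla v\|_2^{p/2-2}$; since $q/2-2<0$ and $p/2-2\le0$ (because $q<p\le4$), this bounds $\|\nabla v\|_2$ from above by a quantity tending to $0$ as $(\lambda,\mu)\to0$. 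Using instead the crude estimates gives $1\le|\lambda|S_q\|\nabla v\|_2^{q-2}+|\mu|S_p\|\nabla v\|_2^{p-2}$ with $q-2\ge0$, $p-2>0$, hence a lower bound for $\|\nabla v\|_2$ that blows up as $(\lambda,\mu)\to0$. The two bounds are incompatible once $|\lambda|,|\mu|$ are small enough, which proves $(v)$ (the new case being $\lambda,\mu>0$) and, dropping the nonpositive term $\lambda a$ or $\mu b$, also $(iii)$ and $(iv)$; the energy-sign halves of $(iii)$–$(iv)$ follow by combining this with the splitting as in $(ii)$.

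The \textbf{main obstacle} is the lower bound, i.e.\ producing a \emph{uniform} positive lower estimate for $\|\nabla v\|_2$ (equivalently, preventing nontrivial solutions from collapsing to $0$). The borderline exponent $q=2$ must be treated separately, since there the concave term is merely linear and the crude estimate only gives exponent $2$; here one absorbs $\lambda A\le\tfrac{|\lambda|}{\lambda_1}\|\nabla v\|_2^2$ by Poincar\'e and balances against the genuinely superlinear convex term, obtaining $(1-\tfrac{|\lambda|}{\lambda_1})\|\nabla v\|_2^2\le|\mu|S_p\|\nabla v\|_2^{p}$ and thus a lower bound $\|\nabla v\|_2\ge(\,(1-|\lambda|/\lambda_1)/(|\mu|S_p)\,)^{1/(p-2)}\to\infty$. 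A further point of care is that the crude estimates require $H^1_0(\Omega)\hookrightarrow L^q,L^p$, so that for the ranges of $N$ in which $q$ or $p$ exceeds $2^\ast$ one must replace the direct embedding by a truncation together with an a priori $L^\infty$-bound for solutions; this regularity step, rather than the algebra of exponents, is the technical heart of the small-parameter nonexistence.
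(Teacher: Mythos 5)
Your treatment of parts $(i)$ and $(ii)$ is correct and is essentially the paper's argument in different clothing: your splitting $J_{\lambda,\mu}(v)=\lambda(\tfrac a2-\tfrac Aq)+\mu(\tfrac b2-\tfrac Bp)$ is just the paper's comparison of the Nehari identity with the sign condition on $J_{\lambda,\mu}$, and both rest on the two-sided inequality $\tfrac12|f(s)|\le f'(s)|s|<|f(s)|$ of Lemma \ref{prop1}$(v)$.

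For parts $(iii)$--$(v)$, however, there is a genuine gap, and it sits exactly where you placed the ``main obstacle''. Your lower bound on $\|\nabla v\|_{2}$ comes from the ``crude'' estimates $A\le S_q\|\nabla v\|_2^{q}$, $B\le S_p\|\nabla v\|_2^{p}$, which presuppose the embeddings $H_0^1(\Omega)\hookrightarrow L^{q}(\Omega), L^{p}(\Omega)$, i.e. $q,p\le 2^{\ast}$. The theorem is asserted for every $N\ge 3$ with $p$ up to $4$ (and $q$ up to just below $4$), while $2^{\ast}=2N/(N-2)<4$ as soon as $N\ge 5$; in that range $\int_\Omega|u|^{p}\,dx$ need not even be finite, the crude estimates are unavailable, and with them goes the entire lower-bound half of your dichotomy. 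Your proposed repair (truncation plus an a priori $L^\infty$ bound) is left as a sketch and is a substantive detour -- it would require observing that the transformed nonlinearity grows like $|v|^{p/2-1}$, hence is subcritical, and then running a Brezis--Kato/Moser iteration. The paper needs none of this, because of one pointwise interpolation you missed: for $2\le r\le 4$ one has $|f(s)|^{r}\le |s|^{2}$ on $[|s|\le 1]$ (since $|f(s)|\le|s|$ and $r\ge 2$) and $|f(s)|^{r}\le (8/\alpha^{2})^{r/4}|s|^{r/2}\le (8/\alpha^{2})^{r/4}|s|^{2}$ on $[|s|>1]$ (since $r\le 4$), whence $\int_\Omega|f(u)|^{r}dx\le [1+(8/\alpha^{2})^{r/4}]\int_\Omega |u|^{2}dx\le C\|u\|^{2}$; this is estimate \eqref{finit}. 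Because the resulting exponent is exactly $2$, the nonlinear terms are absorbed into $\|u\|^{2}$ by Poincar\'e for solutions of \emph{every} size: e.g. for $(v)$, $\|u\|^{2}\le|\lambda|\int_\Omega|f(u)|^{q}dx+|\mu|\int_\Omega|f(u)|^{p}dx\le(|\lambda|+|\mu|)C\|u\|^{2}$ forces $(|\lambda|+|\mu|)C\ge 1$ for any nontrivial solution. This eliminates the upper/lower dichotomy, the separate $q=2$ borderline treatment, and all recourse to regularity theory, and it is what makes the result valid for all $N\ge 3$. Your scheme as written proves $(iii)$--$(v)$ only for $N=3,4$ (where $p,q\le 2^{\ast}$ can hold).
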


%\begin{theorem}\label{teor2}
%Suppose that $\vartheta$ satisfies $(\vartheta_{1})-(\vartheta_{2})$, $p=4$ and $\lambda\leq 0$. Then, there exists $\mu_{\ast}$ such that \eqref{P} just has the nontrivial solution for all $\mu\in (0, \mu_{\ast})$. If $q=2$ and $\mu\leq 0$, then there exists $\lambda_{\ast}$ such that \eqref{P} just has the nontrivial solution for all $\lambda\in (0, \lambda_{\ast})$
%\end{theorem}

\begin{theorem}\label{teor3}
Suppose that $\vartheta$ satisfies $(\vartheta_{1})-(\vartheta_{3})$. The following claims hold:
\begin{enumerate}
\item[$(i)$] Let $\lambda\in\R$, $\mu>0$ and $1<q<4$. If $4<p<22^{\ast}$, then \eqref{P} has a sequence of solutions $\{u_{n}\}$ with $J_{\lambda, \mu}(f^{-1}(u_{n}))\to\infty$. Furthermore, if $\max\{q, 2\}<p< 4$, then for each $k\in\N$ there exists $\mu_{k}>0$ such that $\eqref{P}$ has at least $k$ pairs of nontrivial solutions $u_{k}$ with $J_{\lambda, \mu}(f^{-1}(u_{k}))> 0$, provided that $\mu\in (\mu_{k}, \infty)$;

\item[$(ii)$] Let $\lambda>0$, $\mu\in\R$ and $p\neq 4$. If $1<q<2$, then \eqref{P} has a sequence of solutions $\{u_{n}\}$ with $J_{\lambda, \mu}(f^{-1}(u_{n}))<0$ and $J_{\lambda, \mu}(f^{-1}(u_{n}))\to 0$. Furthermore, if $2\leq q<4$, then for each $k\in\N$ there exists $\lambda_{k}>0$ such that $\eqref{P}$ has at least $k$ pairs of nontrivial solutions $u_{k}$ with $J_{\lambda, \mu}(f^{-1}(u_{k}))< 0$, provided that $\lambda\in (\lambda_{k}, \infty)$.

\item[$(iii)$] Let $\lambda>0$, $\mu<\lambda_{1}\alpha^{2}/4$ and $p=4$. Then, for each $k\in\N$ there exists $\lambda_{k}>0$ such that $\eqref{P}$ has at least $k$ pairs of nontrivial solutions $u_{k}$ with $J_{\lambda, \mu}(f^{-1}(u_{k}))< 0$, provided that $\lambda\in (\lambda_{k}, \infty)$, where $\alpha$ is defined in $(\vartheta_{3})$.

%\item[$(iv)$] If $2\leq q<p=4$, $\lambda>0$ and $\mu>0$, then for each $k\in\N$ there exists $\lambda_{k}, \mu_{k}>0$ such that $\eqref{P}$ has at least $k$ pairs of nontrivial solutions with negative energy, provided that $\lambda\in (\lambda_{k}, \infty)$ and $\mu\in (0, \mu_{k})$.
\end{enumerate}
\end{theorem}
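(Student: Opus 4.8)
The plan is to carry out everything in the transformed variational setting already available: writing $u=f(v)$ where $f'=1/\sqrt{\vartheta(f)}$, the weak solutions of \eqref{P} are exactly the critical points $v\in H_0^1(\Omega)$ of the even $C^1$ functional
\[
J_{\lambda,\mu}(v)=\frac12\int_\Omega|\nabla v|^2-\frac{\lambda}{q}\int_\Omega|f(v)|^q-\frac{\mu}{p}\int_\Omega|f(v)|^p,
\]
and $J_{\lambda,\mu}(f^{-1}(u))=J_{\lambda,\mu}(v)$ is the energy named in the statement. I would first record, from $(\vartheta_1)$--$(\vartheta_3)$ and the properties of $f$ established earlier, the two asymptotic regimes $f(v)\sim f'(0)v$ as $v\to0$ and $f(v)\sim c_\alpha\,\mathrm{sgn}(v)|v|^{1/2}$ as $|v|\to\infty$. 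These produce \emph{effective exponents} $q$ and $p$ near the origin and $q/2$ and $p/2$ at infinity, and in particular the hypothesis $p<22^{\ast}$ is precisely $p/2<2^{\ast}$, so both nonlinear terms are compactly subordinate to the Dirichlet norm.

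The structural dichotomy that governs the three items is the behaviour of the convex term at infinity, which I isolate first. Since $\frac14|f(v)|^4\sim\frac{2}{\alpha^2}v^2$, the Poincar\'e inequality $\int_\Omega|\nabla v|^2\ge\lambda_1\int_\Omega v^2$ shows that for $p=4$ the functional is coercive and bounded below exactly when $\mu<\lambda_1\alpha^2/4$ -- the hypothesis of (iii); for $p<4$ the convex term is subquadratic at infinity and $J_{\lambda,\mu}$ is coercive for every fixed $\mu$; and for $p>4$ with $\mu>0$ it is superquadratic and $J_{\lambda,\mu}$ is unbounded below. In each regime I would verify the Palais--Smale (or Cerami) condition: boundedness of the sequences is immediate when $J_{\lambda,\mu}$ is coercive, while for $p>4$ it follows from an Ambrosetti--Rabinowitz type inequality for $v\mapsto\int_\Omega|f(v)|^p$ (a consequence of the monotonicity of $s\mapsto f(s)f'(s)$ furnished by $(\vartheta_2)$), after which compactness comes from the subcritical embedding.

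Each conclusion is then produced by an even minimax scheme. For the first claim of (i) ($p>4$) the superquadratic convex term, dominating the lower-order concave term, places $J_{\lambda,\mu}$ in the setting of the Fountain Theorem: on each finite-dimensional $Y_k$ it tends to $-\infty$, while on the complementary high-frequency spaces $Z_k$ its infimum over a sphere diverges with $k$, yielding an unbounded sequence of critical values, i.e. $J_{\lambda,\mu}(f^{-1}(u_n))\to\infty$. For the first claim of (ii) ($1<q<2$) the concave term dominates near the origin, so $J_{\lambda,\mu}<0$ on small spheres of every finite-dimensional subspace, and the Dual Fountain Theorem -- which does not require global boundedness below, covering also $p>4$, $\mu>0$ -- delivers a sequence of negative critical values tending to $0$. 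The remaining three claims I would treat through the Krasnoselskii genus and the standard principle that an even functional satisfying (PS) which is bounded above by $-\delta<0$ on a symmetric compact set of genus $\ge k$ possesses at least $k$ pairs of nontrivial critical points with negative energy. For (ii)${}_2$ ($2\le q<4$) and (iii) ($p=4$, $\mu<\lambda_1\alpha^2/4$), fixing a $k$-dimensional subspace $V_k$ and, by compactness of its unit sphere, a radius $t_0$ with $\int_\Omega|f(t_0 w)|^q\ge\delta_0>0$ for all $\|w\|=1$, one gets
\[
\sup_{\|w\|=1}J_{\lambda,\mu}(t_0 w)\le \frac{t_0^2}{2}+|\mu|\,C-\frac{\lambda}{q}\,\delta_0,
\]
which is negative once $\lambda$ exceeds some $\lambda_k$; this is the sought genus-$k$ set, and the negative critical values it produces give the $k$ pairs. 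For the second claim of (i) ($\max\{q,2\}<p<4$, $\mu$ large) the functional is coercive and even but the required energy is positive, so I would instead build the $k$ pairs from a symmetric mountain-pass/linking family whose levels lie strictly above the concave-dominated neighbourhood of the origin, using that large $\mu$ creates $k$ independent such levels.

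The main obstacle is concentrated in the borderline growth created by the square-root behaviour of $f$, and is twofold. First, the Palais--Smale analysis when $p>4$: because $\int_\Omega|f(v)|^p$ grows only like $\int_\Omega|v|^{p/2}$, the Ambrosetti--Rabinowitz condition has to be re-derived for the composed nonlinearity, and it is exactly $p>4$ that makes it hold -- this is where the threshold separating the items enters. Second, the positive sign of the energy in (i)${}_2$: when $1<q<2$ and $\lambda>0$ the concave term already drives $J_{\lambda,\mu}$ below zero near the origin, so bounding the minimax levels away from $0$ from below -- separating the genus-$k$ minimax class from that negative region and quantifying, via $(\vartheta_2)$, how large $\mu$ must be for $k$ such positive levels to coexist -- is the delicate point of the whole argument.
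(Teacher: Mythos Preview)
Your overall architecture---Fountain Theorem for the first half of $(i)$ and Dual Fountain Theorem for the first half of $(ii)$---coincides with the paper's, and your reading of the effective exponents $q,p$ near $0$ versus $q/2,p/2$ at infinity is exactly the mechanism the paper exploits throughout.

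The divergence, and the gap, is in the three ``finite multiplicity'' claims. You propose to obtain the $k$ pairs in $(ii)_2$ and $(iii)$ via Clark's genus principle (an even functional, satisfying $(PS)$, negative on a compact symmetric set of genus $k$, has $k$ pairs of critical points with negative energy). That principle requires $J_{\lambda,\mu}$ to be bounded below. In $(iii)$ this is fine, and in $(ii)_2$ it is fine when $p<4$ or when $\mu\le 0$; but $(ii)_2$ is stated for all $\mu\in\R$ and all $p\neq 4$, so in particular for $p>4$ and $\mu>0$, where $J_{\lambda,\mu}$ is \emph{unbounded below} (your own dichotomy says so). In that regime Clark's theorem does not apply, and your sketch gives no substitute. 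The paper avoids this by never invoking global boundedness below: it runs a truncated dual-fountain scheme on the finite-dimensional approximations $Y_t$, producing levels $c_m^t\in[d_m,b_m]\subset(-\infty,0)$, and then passes to the limit in $t$ using the $(PS)_c^{\ast}$ condition to recover genuine critical values $c_m<0$. The $(PS)_c^{\ast}$ condition (rather than coercivity) is also what the paper verifies for the compactness step, precisely because it must cover this unbounded-below case.

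For $(i)_2$ you are candid that the argument is only a gesture. The paper's construction there is the mirror image of the above: it sets $c_m=\inf_{\gamma\in\Gamma_m}\max_{B_m}J_{\lambda,\mu}(\gamma(u))$ over odd maps from a ball in $Y_m$, uses an intersection lemma to get $c_m\ge\inf_{Z_m,\|u\|=r_m}J_{\lambda,\mu}\to\infty$, and shows that for $\mu$ large the boundary condition $\max_{\partial B_m}J_{\lambda,\mu}\le 0$ holds; distinct $c_m$'s then give the $k$ pairs via $(PS)_{c_m}$. Your idea of ``$k$ independent positive levels for large $\mu$'' is the right target, but the quantitative separation of those levels from the concave negative region near $0$ is exactly what the intersection lemma with the high-frequency spaces $Z_m$ provides, and that ingredient is missing from your outline.
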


Throughout the paper $|A|$ denotes the Lebesgue measure of a measurable set $A\subset\R^{N}$, $[1<u]:=\{x\in\Omega: 1<u(x)\}$, $\lambda_{1}$ is the first eigenvalue of laplacian operator with homogeneous Dirichlet boundary condition and $C, C_{0}, C_{1}, C_{2}$ stand for positive constants whose exact value is not relevant for our purpose.

%\begin{theorem}\label{teor3}
%Suppose $\vartheta$ satisfies $(\vartheta_{1})-(\vartheta_{3})$. If $p=4$, then there exist $\mu^{\ast}\geq\mu_{\ast}>0$ such that \eqref{P} has a sequence of weak solutions $\{u_{n}\}$ with $\varphJ_{\lambda, \mu}(u_{n})\to\infty$ for all $\lambda\in\R$ and $\mu\in (\mu^{\ast}, \infty)$. Moreover, \eqref{P} just admits the trivial solution if $\lambda\leq 0$ and $\mu\in (0, \mu_{\ast})$ 
%\end{theorem}

%\begin{theorem}\label{teor4}
%Suppose $\vartheta$ satisfies $(\vartheta_{1})-(\vartheta_{3})$. If $q\in [2, 4)$ then there exist $\lambda^{\ast}\geq\lambda_{\ast}>0$ such that \eqref{P} has a sequence of weak solutions $\{u_{n}\}$ with $\varphJ_{\lambda, \mu}(u_{n})<0$ and $\varphJ_{\lambda, \mu}(u_{n})\to 0$ for all $\lambda\in(\lambda^{\ast}, \infty)$ and $\mu\in \R$. Moreover, \eqref{P} just admits the trivial solution if $\lambda\in (0, \lambda_{\ast})$. 
%\end{theorem}

The paper is organized as follows. 

In Section \ref{se:eigen} we study a suitable change of variable which becomes problem \eqref{P} in a more manageable one. 
In Section \ref{se:trivial} we prove nonexistence results.
In Section \ref{sec:ult} we prove existence results.

%%%%%%%%%%%%%%%%%%%%%%%%%%%%%%%%%%%%%%%%%%%%%%%%%%%%%%%%%%%%%%%%%%%%%%
%%%%%%%%%%%%%%%%%%%%%%%%%%%%%%%%%%%%%%%%%%%%%%%%%%%%%%%%%%%%%%%%%%%%%%
%%%%%%%%%%%%%%%%%%%%%%%%%%%%%%%%%%%%%%%%%%%%%%%%%%%%%%%%%%%%%%%%%%%%%%
%%%%%%%%%%%%%%%%%%%%%%%%%%%%%%%%%%%%%%%%%%%%%%%%%%%%%%%%%%%%%%%%%%%%%%
%%%%%%%%%%%%%%%%%%%%%%%%%%%%%%%%%%%%%%%%%%%%%%%%%%%%%%%%%%%%%%%%%%%%%%
%%%%%%%%%%%%%%%%%%%%%%%%%%%%%%%%%%%%%%%%%%%%%%%%%%%%%%%%%%%%%%%%%%%%%%
%%%%%%%%%%%%%%%%%%%%%%%%%%%%%%%%%%%%%%%%%%%%%%%%%%%%%%%%%%%%%%%%%%%%%%
%%%%%%%%%%%%%%%%%%%%%%%%%%%%%%%%%%%%%%%%%%%%%%%%%%%%%%%%%%%%%%%%%%%%%%
%%%%%%%%%%%%%%%%%%%%%%%%%%%%%%%%%%%%%%%%%%%%%%%%%%%%%%%%%%%%%%%%%%%%%%
%%%%%%%%%%%%%%%%%%%%%%%%%%%%%%%%%%%%%%%%%%%%%%%%%%%%%%%%%%%%%%%%%%%%%%

\section{Preliminaries} \label{se:eigen}

Our approach consists in switching the task to look for solutions of the general semilinear problem
\begin{equation}\label{P}\tag{$P_{\lambda, \mu, q, p}$}
\left \{ \begin{array}{ll}
-div( \vartheta(u)\nabla u)+\frac{1}{2}\vartheta'(u)|\nabla u|^{2}=\lambda |u|^{q-2}u +\mu |u|^{p-2}u & \mbox{in $\Omega$,}\\
u=0 & \mbox{on $\partial\Omega$,}
\end{array}\right.
\end{equation}
by task to find solutions of 
\begin{equation}\label{P'}\tag{$P_{\lambda, \mu, q, p}'$}
\left \{ \begin{array}{ll}
-\Delta v= \lambda f'(v)|f(v)|^{q-2}f(v)+\mu f'(v)|f(v)|^{p-2}f(v) & \mbox{in $\Omega$,}\\
v=0 & \mbox{on $\partial\Omega$,}
\end{array}\right.
\end{equation}
where $f\in C^{2}(\R)$ is a solution of the ordinary differential equation
\begin{equation}\label{ODE}\tag{ODE}
f'(s)=\frac{1}{\vartheta(f(s))^{1/2}} \ \mbox{for} \ s>0 \ \mbox{and} \ f(0)=0,
\end{equation}
with $f(s)=-f(-s)$ for $s\in (-\infty, 0)$. Since $f$ is odd and $\vartheta$ is even, equation \eqref{ODE} is yet true for negative values. It is well known that $v$ is a weak solution of \eqref{P'} if, and only if, $u=f(v)$ is a weak solution of \eqref{P}, see \cite{SSS} or \cite{ShW1}.

\medskip

Despite the proof of next lemma can also be found in \cite{SSS}, for the reader's convenience and by its relevant role throughout the paper, we provide it here.

\begin{lemma} \label{prop1}
Let $\vartheta\in C^{1}(\R)$ and $f$ a solution of \eqref{ODE}. The following claims hold:
\begin{enumerate}
\item[$(i)$] $f$ is uniquely defined and it is an increasing $C^{2}$-diffeomorphism, with $f''(s)=-\vartheta'(f(s))/2\vartheta(f(s))^{2}$, for all $s>0$;
\item[$(ii)$] $0< f'(s)\leq 1$, for all $s\in\R$;
\item[$(iii)$]  $\lim_{s\to 0}f(s)/s=1/\vartheta(0)^{1/2}$; 
\item[$(iv)$]  $ |f(s)|\leq |s|$, for all $s\in\R$; 
\item[$(v)$] Suppose $(\vartheta_{1})-(\vartheta_{2})$ hold. Then, $|f(s)|/2\leq f'(s)|s|<|f(s)|$, for all $s\in\R\backslash\{0\}$,
and the map $s\mapsto |f(s)|/\sqrt{|s|}$ is nonincreasing in $(-\infty, 0)$ and nondecreasing in $(0, \infty)$; 
\item[$(vi)$] Suppose that $(\vartheta_{1})-(\vartheta_{3})$ hold. Then,
$$
\dis\lim_{|s|\to\infty}\frac{|f(s)|}{\sqrt{|s|}}=\left(\frac{8}{\alpha^{2}}\right)^{1/4} \ \mbox{and} \ \lim_{|s|\to\infty}\frac{f(s)}{s}=0,
$$
where $\alpha$ is given in $(\vartheta_{3})$. 
\end{enumerate}
\end{lemma}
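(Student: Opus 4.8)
The plan is to dispatch items $(i)$--$(iv)$ by elementary ODE theory and then to reduce the delicate items $(v)$--$(vi)$ to monotonicity and asymptotics of one explicit integral. First I would settle $(i)$: writing $g(t):=\vartheta(t)^{-1/2}$, which is $C^{1}$ and bounded by $1$ since $\vartheta\in C^{1}$ and $\vartheta\ge 1$, the Cauchy problem $f'=g(f)$, $f(0)=0$ has a unique global $C^{1}$ solution by Picard--Lindel\"{o}f together with the a priori bound $0<f'\le 1$ (which already proves $(ii)$); bootstrapping $f'=g\circ f$ gives $f\in C^{2}$, and the chain rule yields the stated formula for $f''$. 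Since $f'>0$, $f$ is strictly increasing, and because $\vartheta$ is bounded on bounded sets, $f'=\vartheta(f)^{-1/2}$ stays bounded away from $0$ while $f$ remains bounded, forcing $f(s)\to\pm\infty$; hence $f$ is a $C^{2}$-diffeomorphism of $\R$. Items $(iii)$ and $(iv)$ are then immediate: $f(s)/s\to f'(0)=\vartheta(0)^{-1/2}$, and $|f(s)|\le|s|$ follows by integrating $f'\le 1$. The device that drives the rest is the inverse map: from $(f^{-1})'(t)=\vartheta(t)^{1/2}$ and $f^{-1}(0)=0$,
\begin{equation}\label{eq:inv}
f^{-1}(t)=\int_{0}^{t}\sqrt{\vartheta(\tau)}\,d\tau\qquad(t\ge 0),
\end{equation}
so that, writing $t=f(s)$, every statement about $f$ at $s$ becomes one about $\int_{0}^{t}\sqrt{\vartheta}$ at $t$.

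For $(v)$, by oddness it suffices to argue on $(0,\infty)$. Setting $\phi(s)=f(s)/\sqrt{s}$, a direct computation gives $\phi'(s)=s^{-3/2}\bigl(f'(s)s-\tfrac12 f(s)\bigr)$, so the lower bound $f(s)/2\le f'(s)s$ is \emph{equivalent} to $\phi$ being nondecreasing, i.e.\ to the monotonicity assertion. To prove it I pass to $t=f(s)$ via \eqref{eq:inv}: with $G(t):=\int_{0}^{t}\sqrt{\vartheta}$ one has $\phi=t\,G(t)^{-1/2}$, whose derivative has the sign of $M(t):=2G(t)-t\sqrt{\vartheta(t)}$; since $M(0)=0$ and
\[
M'(t)=\frac{1}{2\sqrt{\vartheta(t)}}\bigl(2\vartheta(t)-t\vartheta'(t)\bigr)\ge 0
\]
by $(\vartheta_{2})$ (which says exactly $t\vartheta'(t)\le 2\vartheta(t)$ on $(0,\infty)$), we get $M\ge 0$. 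For the upper bound $f'(s)s<f(s)$, the same substitution reduces it to $G(t)<t\sqrt{\vartheta(t)}$, and this holds because $(\vartheta_{1})$ makes $\sqrt{\vartheta}$ strictly increasing on $(0,\infty)$, so $\int_{0}^{t}\sqrt{\vartheta(\tau)}\,d\tau<\int_{0}^{t}\sqrt{\vartheta(t)}\,d\tau=t\sqrt{\vartheta(t)}$.

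For $(vi)$, again take $s=f^{-1}(t)\to\infty$ as $t\to\infty$. Hypothesis $(\vartheta_{3})$ gives $\sqrt{\vartheta(\tau)}/\tau\to\alpha/\sqrt{2}$, whence by L'H\^{o}pital applied to $G(t)/t^{2}$ one obtains $G(t)/t^{2}\to\alpha/(2\sqrt{2})$, that is $s=G(t)\sim \alpha t^{2}/(2\sqrt{2})$. Inverting, $t\sim(2\sqrt{2}/\alpha)^{1/2}\sqrt{s}$, so that $|f(s)|/\sqrt{|s|}=t/\sqrt{s}\to(2\sqrt{2}/\alpha)^{1/2}=(8/\alpha^{2})^{1/4}$, while $f(s)/s=t/s\to 0$; the negative side is handled by oddness.

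The routine part is $(i)$--$(iv)$; the crux is $(v)$. The single genuinely clever step is recognizing that the two pointwise inequalities \emph{and} the monotonicity are all governed by the inverse representation \eqref{eq:inv}, after which $(\vartheta_{2})$ enters precisely as the sign condition $2\vartheta-t\vartheta'\ge 0$ and $(\vartheta_{1})$ as the monotonicity of $\sqrt{\vartheta}$. The subtlety I would watch is the strictness of the upper bound, which needs $\vartheta$ to be strictly (not merely weakly) increasing on $(0,\infty)$ so that the integral comparison is strict; in $(vi)$ the only care needed is justifying the asymptotic inversion of $G$, for which the L'H\^{o}pital estimate of $s/t^{2}$ suffices.
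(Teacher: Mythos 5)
Your proof is correct, and although items $(i)$--$(iv)$ essentially coincide with the paper's argument (the paper, too, observes that $f=\Upsilon^{-1}$ with $\Upsilon(t)=\int_{0}^{t}\vartheta(r)^{1/2}dr$, which is exactly your inverse representation), your treatment of $(v)$ and $(vi)$ takes a recognizably different route. For $(v)$ the paper stays in the $s$ variable: it introduces $r_{1}(s)=f(s)\vartheta(f(s))^{1/2}-s$ and $r_{2}(s)=2s-f(s)\vartheta(f(s))^{1/2}$, differentiates them using the ODE, and reads the signs off from $(\vartheta_{1})$ and $(\vartheta_{2})$. Your $M(t)=2G(t)-t\sqrt{\vartheta(t)}$ is precisely $r_{2}$ transported by the substitution $s=G(t)$, so for the lower bound the two arguments are the same computation in different variables; but for the upper bound you replace the differentiation of $r_{1}$ by the strict integral comparison $\int_{0}^{t}\sqrt{\vartheta(\tau)}\,d\tau<t\sqrt{\vartheta(t)}$, which is more elementary and makes the role of \emph{strict} monotonicity in $(\vartheta_{1})$ transparent (the paper asserts $r_{1}'>0$ pointwise, which strictly speaking needs an extra remark if $\vartheta'$ has isolated zeros, whereas your integral comparison is airtight). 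The larger divergence is in $(vi)$: the paper first invokes the monotonicity from $(v)$ to guarantee that $l=\lim|f(s)|/\sqrt{|s|}$ exists in $(0,\infty]$, rules out $l=\infty$ by a L'H\^{o}pital computation that ends in a contradiction, and finally pins down $l$ through the self-consistent identity $l=2\sqrt{(1/l^{2})/(\alpha^{2}/2)}$; you instead compute $G(t)/t^{2}\to\alpha/(2\sqrt{2})$ directly by L'H\^{o}pital and conclude $|f(s)|/\sqrt{|s|}=(t^{2}/G(t))^{1/2}\to(8/\alpha^{2})^{1/4}$, which is shorter, avoids the contradiction, and makes $(vi)$ logically independent of $(v)$. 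What the paper's route buys is that it never needs to leave the original variable or invert $f$ explicitly in the asymptotic analysis; what yours buys is a cleaner logical structure and strict inequalities obtained for free.
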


\begin{proof}
$(i)$-$(ii)$ Existence, uniqueness, regularity, monotonicity and $(ii)$ follow directly from \eqref{ODE}.  To see that $f(\R)=\R$, observe that $f(s)=(\Upsilon^{-1})(s)$, where
$$
\Upsilon(t)=\int_{0}^{t}\vartheta(r)^{1/2}dr.
$$
Since $\vartheta\geq 1$, $|\Upsilon(t)|\geq |t|$ for all $t\in\R$. Consequently, $\lim_{|t|\to\infty}|\Upsilon(t)|=\infty$. Thence, $\lim_{|s|\to\infty}|f(s)|=\infty$.

$(iii)$ Notice that, by L'H\^{o}spital rule, we get
$$
\dis\lim_{s\to 0}\frac{f(s)}{s}=\lim_{s\to 0}f'(s)=\frac{1}{\vartheta(0)^{1/2}}.
$$

$(iv)$ It follows from $(ii)$. $(v)$ Since $f$ is odd and $\vartheta$ is even, it is sufficient to prove the inequalities for $s>0$. For that, let $r_{1}:[0,\infty)\to\R$ defined by
\begin{equation*}
r_{1}(s)=f(s)\vartheta(f(s))^{1/2}-s.
\end{equation*}
Notice that $r_{1}(0)=0$ and, by \eqref{ODE} and $(\vartheta_{1})$, we have
$$
r_{1}'(s)=\vartheta'(f(s))f(s)/2\vartheta(f(s))>0.
$$
Therefore, the second inequality in $(v)$ follows. Now, to prove the first inequality in $(v)$, let $r_{2}:[0,\infty)\to\R$ be defined by
\begin{equation*}
r_{2}(s)=2s-f(s)\vartheta(f(s))^{1/2}.
\end{equation*}
We have that $r_{2}(0)=0$ and, by \eqref{ODE} and $(\vartheta_{2})$,
$$
r_{2}'(s)=1-\vartheta'(f(s))f(s)/2\vartheta(f(s))\geq 0,
$$
showing that the inequality in $(v)$ holds. Moreover, since
$$
\left(\frac{f(s)}{\sqrt{s}}\right)'=\frac{2f'(s)s-f(s)}{2s\sqrt{s}}\geq 0, \ \forall \ s>0,
$$
the second part of $(v)$ follows.

$(vi)$ Observe that from $(v)$, we have 
$$
\lim_{|s|\to \infty}\frac{|f(s)|}{\sqrt{|s|}}=l, \ \mbox{with $l\in (0,\infty]$}.
$$ 
Again, since $f$ is odd and $\vartheta$ is even, it is sufficient to consider the case $s\to\infty$. Suppose that 
\begin{equation}\label{52}
\lim_{s\to \infty}f(s)/\sqrt{s}=\infty.
\end{equation} 
If this is the case then, by $(i)$, we get $f(s)\to\infty$ as $s\to\infty$. By applying the L'H\^{o}spital rule and using $(\vartheta_{3})$, we conclude from \eqref{52}, that
\begin{eqnarray*}
\lim_{s\to\infty}\frac{f(s)}{\sqrt{s}}&=&\dis\lim_{s\to\infty}2f'(s)\sqrt{s}\\
&=&2\dis\lim_{s\to\infty}\sqrt{\frac{s}{\vartheta(f(s))}}\\
&=&2\sqrt{\frac{\dis\lim_{s\to\infty}\left(\sqrt{s}/f(s)\right)^{2}}{\dis\lim_{s\to\infty}\vartheta(f(s))/f(s)^{2}}}\\
&=&2\sqrt{\frac{0}{(\alpha^{2}/2)}}=0.
\end{eqnarray*}
Showing that 
\begin{equation}\label{51}
\lim_{s\to\infty}f(s)/\sqrt{s}=0.
\end{equation}
Since \eqref{51} contradicts \eqref{52}, it follows that $0<\lim_{s\to\infty}f(s)/\sqrt{s}=l<\infty$. Applying one more time the L'H\^{o}spital rule, we have
$$
l=2\sqrt{\frac{\dis\lim_{s\to\infty}\left(\sqrt{s}/f(s)\right)^{2}}{\dis\lim_{s\to\infty}\vartheta(f(s))/f(s)^{2}}}=2\sqrt{\frac{1/l^{2}}{(\alpha^{2}/2)}}.
$$
Or equivalently,
\begin{equation}\label{53}
l=\left(\frac{8}{\alpha^{2}}\right)^{1/4}. 
\end{equation}
On the other hand, from \eqref{53},
$$
\dis\lim_{s\to\infty}\frac{f(s)}{s}=\dis\lim_{s\to\infty}\frac{f(s)}{\sqrt{s}}\frac{1}{\sqrt{s}}=\left(\frac{8}{\alpha^{2}}\right)^{1/4}\times 0=0. 
$$  
\end{proof}

%\medskip
%
%\begin{remark}\label{rem1}
%It follows from $(\vartheta_{1})$ that the set $A:=\{s\in\R: f'(s)|s|=|f(s)|\}$ is discrete. In fact, suppose that there exists an open interval $I\subset A$. Since $f$ is odd, we can suppose $A\subset [0, \infty)$. Taking into account that $f>0$ in $(0, \infty)$, we obtain
%$$
%f'(s)s=f(s),
%$$
%for all $s\in A$. Since $f(0)=0$, the unique solution the previous ordinary equation is $f(s)=s$. This implies that $\vartheta(s)=1$ which does not satisfies $(\vartheta_{1})$.

Naturally, a weak solution of \eqref{P'} is a function $u\in H_{0}^{1}(\Omega)$ satisfying
\begin{equation}\label{ws}
\int_{\Omega}\nabla u\nabla v dx=\lambda\int_{\Omega}f'(u)\left|f(u)\right|^{q-2}f(u)v dx+\mu\int_{\Omega}f'(u)\left|f(u)\right|^{p-2}f(u)v dx,
\end{equation}
for all $v\in H_{0}^{1}(\Omega)$. Moreover, the energy functional $J_{\lambda, \mu}: H_{0}^{1}(\Omega)\to\R$ associated to \eqref{P'} is 
\begin{equation}\label{ef}
J_{\lambda, \mu}(u)=\frac{1}{2}\|u\|^{2} -\frac{\lambda}{q}\int_{\Omega}\left|f(u)\right|^{q} dx-\frac{\mu}{p}\int_{\Omega}\left|f(u)\right|^{p} dx.
\end{equation}
Lemma \ref{prop1} assures that the previous notion of weak solution makes sense, as well as ensures that functional $J_{\lambda, \mu}$ is well defined and is $C^{1}$.
%A more general version of the next two results can be found in \cite{Wil}. Let $X=\overline{\dis\oplus_{j\in\N}X_{j}}$ be a Banach space, $X_{j}$ subspaces of finite dimension,
%$$
%Y_{k}:=\dis\oplus_{j=0}^{k}X_{j} \ \mbox{and} Z_{k}:=\overline{\dis\oplus_{j=k}^{\infty}X_{j}}.
%$$ 
%
%\begin{theorem}\label{fonte1}
%Let $\phi\in C^{1}(X,\R)$ be an even functional. If, for every $k\in\N$, there exists $\rho_{k}>r_{k}>0$ such that
%\begin{itemize}
%\item[$(A_{1})$] $\max_{u\in Y_{k}, \left\|u\right\|=\rho_{k}}\phi(u)\leq 0$;
%\end{itemize}
%\begin{itemize}
%\item[$(A_{2})$] $\inf_{u\in Z_{k}, \left\|u\right\|=r_{k}}\phi(u)\to\infty$ as $k\to\infty$;
%\end{itemize}
%\begin{itemize}
%\item[$(A_{3})$] $\phi$ satisfies the $(PS)_{c}$ condition for every $c>0$.
%\end{itemize}
%Then $\phi$ has an unbounded sequence of critical points.
%\end{theorem}
%
% 
%\begin{theorem}\label{fonte2}
%Let $\phi\in C^{1}(X,\R)$ be an even functional. If, for every $k\geq k_{0}$, there exists $\rho_{k}>r_{k}>0$ such that
%\begin{itemize}
%\item[$(B_{1})$] $\inf_{u\in Z_{k}, \left\|u\right\|=\rho_{k}}\phi(u)\geq 0$;
%\end{itemize}
%\begin{itemize}
%\item[$(B_{2})$] $\max_{u\in Y_{k}, \left\|u\right\|=r_{k}}\phi(u)<0$;
%\end{itemize}
%\begin{itemize}
%\item[$(B_{3})$] $\inf_{u\in Z_{k}, \left\|u\right\|\leq\rho_{k}}\phi(u)\to 0$ as $k\to\infty$;
%\end{itemize}
%\begin{itemize}
%\item[$(B_{4})$] $\phi$ satisfies the $(PS)_{c}^{\ast}$ condition for every $c\in[\inf_{u\in Z_{k_{0}}, \left\|u\right\|\leq\rho_{k}}\phi(u),0)$.
%\end{itemize}
%Then $\phi$ has a sequence of negative critical values converging to zero.
%\end{theorem}
Before finishing this section, we are going to introduce two technical lemmas which will be very helpful later on.

\begin{lemma}\label{lema1}

Let $\{u_{n}\}$ be a sequence of measurable functions $u_{n}:\Omega\to \R$. Then, 
$$
\chi_{\left[1<\liminf\limits_{n\to\infty} u_{n}\right]}(x)\leq \liminf\limits_{n\to\infty}\chi_{[1<u_{n}]}(x) \ \mbox{in $\Omega$}.
$$

%\item[$(ii)$] Let $\{v_{n}\}$ be a sequence of nonnegative measurable functions $v_{n}:\Omega\to \R$. Then, 
%$$
%\int_{[1<\liminf\limits_{n\to\infty} u_{n}]}\liminf\limits_{n\to\infty}v_{n} dx\leq \liminf\limits_{n\to\infty}\int_{[1<u_{n}]}v_{n} dx.
%$$
%\end{itemize}

\end{lemma}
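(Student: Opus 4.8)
The plan is to prove the inequality pointwise, since both sides are functions of $x\in\Omega$ and the asserted bound is a pointwise one. Fixing an arbitrary $x\in\Omega$, I would split the argument according to whether or not $x$ belongs to the set $[1<\liminf_{n}u_{n}]$, exploiting the elementary fact that every indicator function takes values in $\{0,1\}$, so that the $\liminf$ of an indicator sequence is itself either $0$ or $1$.

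In the first case, if $x\notin[1<\liminf_{n}u_{n}]$, then the left-hand side equals $0$; and since $\chi_{[1<u_{n}]}(x)\geq 0$ for every $n$, the desired inequality $0\leq\liminf_{n}\chi_{[1<u_{n}]}(x)$ holds trivially. Thus the only substantive case is $x\in[1<\liminf_{n}u_{n}]$, that is, $\liminf_{n}u_{n}(x)>1$. Here I would invoke the definition $\liminf_{n}u_{n}(x)=\sup_{m}\inf_{k\geq m}u_{k}(x)$ together with the observation that $m\mapsto\inf_{k\geq m}u_{k}(x)$ is a nondecreasing sequence. Because its supremum strictly exceeds $1$, there must exist an index $m_{0}$ with $\inf_{k\geq m_{0}}u_{k}(x)>1$; consequently $u_{k}(x)>1$, and hence $\chi_{[1<u_{k}]}(x)=1$, for every $k\geq m_{0}$. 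Since the indicator sequence is eventually constantly equal to $1$, its $\liminf$ equals $1$, and we conclude $\chi_{[1<\liminf_{n}u_{n}]}(x)=1\leq 1=\liminf_{n}\chi_{[1<u_{n}]}(x)$.

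There is no genuine analytic difficulty in this lemma; the only step demanding care is the passage from ``$\sup_{m}\inf_{k\geq m}u_{k}(x)>1$'' to ``$\inf_{k\geq m_{0}}u_{k}(x)>1$ for some $m_{0}$''. This is precisely the place where one must not confuse a strict inequality satisfied by the supremum with the behaviour of the individual infima: the conclusion follows because if every term $\inf_{k\geq m}u_{k}(x)$ were $\leq 1$, then the supremum could not exceed $1$. Once this elementary property of the supremum is recorded, the remainder is a direct unwinding of the definition of $\liminf$ for a $\{0,1\}$-valued sequence. Since the bound is established at each $x$ separately, it holds throughout $\Omega$, which completes the argument.
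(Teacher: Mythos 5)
Your proof is correct and takes essentially the same route as the paper: both are pointwise case analyses exploiting that the indicator functions are $\{0,1\}$-valued. The only cosmetic difference is that you argue the direct implication ($\liminf_{n}u_{n}(x)>1$ forces $u_{k}(x)>1$, hence $\chi_{[1<u_{k}]}(x)=1$, for all large $k$, via the $\sup$--$\inf$ definition), whereas the paper argues the contrapositive ($\liminf_{n}\chi_{[1<u_{n}]}(x)=0$ yields a subsequence with $u_{n_{k}}(x)\leq 1$, forcing $\liminf_{n}u_{n}(x)\leq 1$).
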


\begin{proof}
Let us define $u:=\liminf\limits_{n\to\infty} u_{n}$ and $g:\Omega\to\{0, 1\}$ by 
$$
g(x)=\liminf\limits_{n\to\infty}\chi_{[1< u_{n}]}(x).
$$
If $g\equiv 1$, there is nothing to be proven. Otherwise, it is sufficient to prove that if $g(x)=0$, then $\chi_{[1< u]}(x)=0$. Indeed, observe that if $g(x)=0$ then there exists a subsequence $u_{n_{k}}$ where $\{n_{k}\}\subset\N$ depends on $x$, such that
$$
\chi_{[1< u_{n_{k}}]}(x)=0, \ \forall \ k\in\N.
$$
Equivalently,
$$
u_{n_{k}}(x)\leq 1, \ \forall \ k\in\N.
$$
Passing to the lower limit as $k$ goes to infinity, we obtain
$$
u(x)= \liminf\limits_{n\to\infty} u_{n}(x)\leq \liminf\limits_{k\to\infty} u_{n_k}(x)\leq 1,
$$
or yet
$$
\chi_{[1< u]}(x)=0.
$$
\end{proof}

Now on, let us agree that, $\{e_{j}\}$ stands for a Hilbertian basis of $H_{0}^{1}(\Omega)$ composed by functions in $L^{\infty}(\Omega)$ (for example the basis composed by eigenfunctions of laplacian operator with Dirichlet boundary condition),
$$
X_{j}:=Span\{e_{j}\}, Y_{k}:=\dis\oplus_{j=0}^{k}X_{j} \ \mbox{and} \ Z_{k}:=\overline{\dis\oplus_{j=k}^{\infty}X_{j}}.
$$

Since $|f(s)|$ behaves like $|s|$ near the origin and like $|s|^{1/2}$ at infinity, next lemma will be very helpful to get some important estimates for the existence results.

\begin{lemma}\label{lema2}

Let $S_{k}$ be the unit sphere of $Y_{k}$. There exist positive constants $\beta_{k}, \beta_{k}(r), \alpha_{k}, \tau_{k}$ such that:

\begin{enumerate}
\item[$(i)$]   
\begin{equation}\label{ineq51}
\beta_{k}\leq |[1<|su|]|,
\end{equation}
for all $u\in S_{k}$ and $s>\alpha_{k}$, and 
\begin{equation}\label{segundo}
[|su|<1]=\Omega,
\end{equation}
for all $u\in S_{k}$ and $0<s<\tau_{k}$. 
%Moreover, $\tau_{k}\to 0$ as $k\to\infty$.

\medskip

\item[$(ii)$] for each $r\in [1, 2^{\ast}]$, 
\begin{equation}\label{terceiro}
\beta_{k}(r)\leq \int_{[1<|su|]}|u|^{r}dx, 
\end{equation}
for all $u\in S_{k}$ and $s>\alpha_{k}$. 
%and
%\begin{equation}\label{quarto}
%\zeta_{k}\leq \int_{[1>|su|]}|u|^{r}dx, 
%\end{equation}
%for all $0<s<\tau_{k}$, every $u\in S_{k}$ and some $\zeta_{k}>0$. Moreover, $\zeta_{k}\to 0$ as $k\to\infty$.
\end{enumerate}

\end{lemma}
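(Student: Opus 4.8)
The plan is to exploit the single structural fact that $Y_k$ is \emph{finite dimensional}: its unit sphere $S_k$ is compact and all norms on $Y_k$ are mutually equivalent. First I would record the two uniform bounds that drive everything. Since the basis functions $e_0,\dots,e_k$ lie in $L^{\infty}(\Omega)$, every element of $Y_k$ belongs to $L^{\infty}(\Omega)$, and equivalence of the $H_0^1$-norm $\|\cdot\|$ with the $L^{\infty}$-norm on $Y_k$ yields a constant $M_k:=\sup_{u\in S_k}\|u\|_{L^{\infty}(\Omega)}<\infty$. Likewise, the map $(u,r)\mapsto\|u\|_{L^r(\Omega)}$ is continuous and strictly positive on the compact set $S_k\times[1,2^{\ast}]$, so that $c_k:=\min_{u\in S_k,\,r\in[1,2^{\ast}]}\|u\|_{L^r(\Omega)}>0$.

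The equality $[|su|<1]=\Omega$ in $(i)$ is then immediate: setting $\tau_k:=1/M_k$, for every $u\in S_k$ and $0<s<\tau_k$ one has $|su(x)|\le s\,M_k<1$ for a.e.\ $x\in\Omega$, which is exactly \eqref{segundo}. For the substantive part I would establish $(ii)$ first and deduce the measure estimate $(i)$ from it. Splitting $\Omega=[1<|su|]\cup[|su|\le 1]$ and using $|u|\le 1/s$ on the second set,
\[
\int_{[1<|su|]}|u|^r\,dx=\int_{\Omega}|u|^r\,dx-\int_{[|su|\le 1]}|u|^r\,dx\ \ge\ c_k^{\,r}-\Big(\tfrac{1}{s}\Big)^{r}|\Omega|.
\]
I would then fix $\alpha_k$ so large that the error term is at most half the main term for \emph{every} $r\in[1,2^{\ast}]$ at once: taking $\alpha_k\ge 1$ gives $(1/s)^{r}\le 1/s$ for $r\ge1$ and $s>\alpha_k$, while $c_k^{\,r}\ge\gamma_k:=\min\{1,c_k^{\,2^{\ast}}\}>0$; hence choosing $\alpha_k:=\max\{1,\,2|\Omega|/\gamma_k\}$ forces $(1/s)^{r}|\Omega|\le |\Omega|/s<\gamma_k/2\le c_k^{\,r}/2$. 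This produces
\[
\int_{[1<|su|]}|u|^r\,dx\ \ge\ c_k^{\,r}-\tfrac{1}{2}c_k^{\,r}=\tfrac{1}{2}c_k^{\,r}=:\beta_k(r)>0
\]
for all $u\in S_k$ and $s>\alpha_k$, which is \eqref{terceiro}. Finally, taking $r=1$ and bounding $\int_{[1<|su|]}|u|\,dx\le M_k\,|[1<|su|]|$ gives $|[1<|su|]|\ge\beta_k(1)/M_k=:\beta_k$, i.e.\ \eqref{ineq51}.

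The only genuinely delicate point is the \emph{uniformity} of the constants. Pointwise, for each fixed $u\neq0$ the measure $|[1<|su|]|=|\{|u|>1/s\}|$ increases to $|\{|u|>0\}|>0$ as $s\to\infty$, but one needs a bound valid simultaneously over the whole sphere and, for $(ii)$, over the whole exponent range $r\in[1,2^{\ast}]$, with a \emph{single} threshold $\alpha_k$. Both difficulties are dissolved by compactness: $S_k\times[1,2^{\ast}]$ is compact, which furnishes the uniform lower bound $c_k$ and the uniform $L^{\infty}$ bound $M_k$, and the elementary monotonicity $(1/s)^{r}\le 1/s$ for $s\ge1,\ r\ge1$ lets one control the error uniformly in $r$. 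Everything else reduces to the routine splitting of the integral displayed above, so no further obstacle is expected.
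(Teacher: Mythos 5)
Your proof is correct, but it takes a genuinely different route from the paper's. The paper proves \eqref{ineq51} by contradiction: if $s_{n}\to\infty$ and $u_{n}\in S_{k}$ were such that $|[1<|s_{n}u_{n}|]|\to 0$, then by finite dimensionality $u_{n}\to u\in S_{k}$ a.e., hence $|s_{n}u_{n}|\to\infty$ on $[u\neq 0]$, and Lemma \ref{lema1} together with Fatou's lemma gives $0<|[u\neq 0]|\leq \liminf_{n}|[1<|s_{n}u_{n}|]|=0$; item $(ii)$ is then handled by applying Fatou and Lemma \ref{lema1} again to $\int_{[1<|su|]}|u|^{r}dx$ as $s\to\infty$. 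You instead argue directly and quantitatively: compactness of $S_{k}$ (and of $S_{k}\times[1,2^{\ast}]$) gives the uniform bounds $M_{k}=\sup_{u\in S_{k}}|u|_{\infty}<\infty$ and $c_{k}=\min_{u\in S_{k},\,r\in[1,2^{\ast}]}|u|_{r}>0$, and the elementary splitting $\int_{[1<|su|]}|u|^{r}dx=\int_{\Omega}|u|^{r}dx-\int_{[|su|\leq 1]}|u|^{r}dx\geq c_{k}^{r}-(1/s)^{r}|\Omega|$ yields explicit constants $\alpha_{k}$ and $\beta_{k}(r)$, with \eqref{ineq51} deduced from the case $r=1$ via $\int_{[1<|su|]}|u|\,dx\leq M_{k}\,|[1<|su|]|$ (the treatment of \eqref{segundo} is essentially the same in both arguments). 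Your approach buys three things: it dispenses entirely with Lemma \ref{lema1} and Fatou; it produces explicit thresholds and constants; and, most importantly, it makes the uniformity in $u$ and in $r$ of the single threshold $\alpha_{k}$ completely transparent, whereas the paper's proof of $(ii)$ takes $s\to\infty$ for each \emph{fixed} $u$, so the existence of one $\alpha_{k}$ valid on all of $S_{k}$ is left implicit there (strictly, it would require repeating the compactness/contradiction argument of $(i)$). What the paper's softer argument buys is independence from the $L^{\infty}$--$L^{r}$ comparison: it uses only pointwise convergence and measure-theoretic tools, which is precisely what Lemma \ref{lema1} was introduced for. The one point in your write-up that deserves a word of justification is the joint continuity of $(u,r)\mapsto |u|_{r}$ on $S_{k}\times[1,2^{\ast}]$; it does hold (dominated convergence on the bounded domain $\Omega$, using the uniform bound $M_{k}$), or it can be avoided altogether via H\"{o}lder's inequality, $|u|_{r}\geq |u|_{1}\,|\Omega|^{1/r-1}\geq \min\{1,|\Omega|^{-1}\}\min_{u\in S_{k}}|u|_{1}$, which needs compactness only for the $L^{1}$ norm.
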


\begin{proof}

$(i)$ First, we are going to prove that \eqref{ineq51} holds.
%Indeed, since $Y_{k}\subset H_{0}^{1}(\Omega)\cap L^{\infty}(\Omega)$ has finite dimension, there exists $C=C(S_{k})>0$ such that
%$$
%|su|_{\infty}\geq C \|su\|=Cs, 
%$$
%for all $u\in S_{k}$ and $s>0$. By choosing $s_{\ast}>1/C$, we obtain
%$$
%|su|_{\infty}>1,
%$$
%for all $u\in S_{k}$ and $s\geq s_{\ast}$. 
Indeed, suppose that there exist $\{s_{n}\}\subset (0, \infty)$ and $\{u_{n}\}\subset S_{k}$ with $s_{n}\to\infty$ and
\begin{equation}\label{equ1}
|[1<|s_{n}u_{n}|]|\to 0 \ \mbox{as} \ n\to\infty.
\end{equation}
Since $Y_{k}$ has finite dimension, there exists 
\begin{equation}\label{equ2}
u\in S_{k} 
\end{equation}
such that, up to a subsequence, $u_{n}\to u$ in $H_{0}^{1}(\Omega)$ and
$$
u_{n}(x)\to u(x) \ \mbox{a.e. in $\Omega$}.
$$
Therefore,
\begin{equation}\label{equ3}
|s_{n}u_{n}|\to \infty \ \mbox{in $[u\neq 0]$}.
\end{equation}
It follows from \eqref{equ2}, \eqref{equ3}, Lemma \ref{lema1}$(i)$, Fatou Lemma and \eqref{equ1} that
\begin{eqnarray*}
0<|[u\neq 0]|&\leq& |[1<\liminf\limits_{n\to\infty}|s_{n}u_{n}|]|\\
&=&\int_{\Omega}\chi_{[1<\liminf\limits_{n\to\infty}|s_{n}u_{n}|]}(x) dx\\
&\leq & \int_{\Omega}\liminf\limits_{n\to\infty}\chi_{[1<|s_{n}u_{n}|]}(x) dx\\
&\leq & \liminf\limits_{n\to\infty} \int_{\Omega}\chi_{[1<|s_{n}u_{n}|]}(x) dx\\
&=& \liminf\limits_{n\to\infty} |[1<|s_{n}u_{n}|]|=0.
\end{eqnarray*}
A clear contradiction. Therefore \eqref{ineq51} holds. Now, in order to prove \eqref{segundo}, observe that if $u\in S_{k}$ then, by Cauchy-Schwarz inequality
\begin{equation}\label{elle}
|u(x)|=\left|\sum_{j=0}^{k}y_{j}e_{j}(x)\right|\leq \left(\sum_{j=0}^{k}y_{j}^{2}\right)\left(\sum_{j=0}^{k}e_{j}(x)^{2}\right)\leq (k+1)M^{2},
\end{equation}
where $M:=\max_{j=0}^{k}|e_{j}|_{\infty}$. Consequently, choosing $\tau_{k}:=1/(k+1)M^{2}$ the result follows.

$(ii)$ By Fatou Lemma, Lemma \ref{lema1} and since $Y_{k}$ has finite dimension, we have
\begin{eqnarray*}
\liminf\limits_{s\to\infty}\int_{[1<|su|]}|u|^{r}dx&=&\liminf\limits_{s\to\infty}\int_{\Omega}|u|^{r}\chi_{[1<|su|]}(x)dx\\
&\geq& \int_{\Omega}|u|^{r}\liminf\limits_{s\to\infty}\chi_{[1<|su|]}(x)dx\\
&\geq & \int_{\Omega}|u|^{r}\chi_{[u\neq 0]}(x)dx\\
&=&  \int_{\Omega}|u|^{r}dx\geq\zeta_{k}(r),
\end{eqnarray*}
for all $u\in S_{k}$ and some $\zeta_{k}(r)>0$. Choosing $0<\beta_{k}(r)<\zeta_{k}(r)$, the result is proven.
 
%In the second case, since $Y_{k}$ has finite dimension, we have $\zeta_{k}:=\inf_{u\in Y_{k}}\int_{\Omega}|u|^{r}dx/\|u\|^{r}>0$. Thence,
%$$
%\int_{[1>|su|]}|u|^{r}dx=\int_{\Omega}|u|^{r}dx\geq \zeta_{k}\|u\|^{r}=\zeta_{k},
%$$
%for all $0<s< \tau_{k}$.

\end{proof}

%%%%%%%%%%%%%%%%%%%%%%%%%%%%%%%%%%%%%%%%%%%%%%%%%%%%%%%%%%%%
%%%%%%%%%%%%%%%%%%%%%%%%%%%%%%%%%%%%%%%%%%%%%%%%%%%%%%%%%%%%
%%%%%%%%%%%%%%%%%%%%%%%%%%%%%%%%%%%%%%%%%%%%%%%%%%%%%%%%%%%%
%%%%%%%%%%%%%%%%%%%%%%%%%%%%%%%%%%%%%%%%%%%%%%%%%%%%%%%%%%%%
%%%%%%%%%%%%%%%%%%%%%%%%%%%%%%%%%%%%%%%%%%%%%%%%%%%%%%%%%%%%
%%%%%%%%%%%%%%%%%%%%%%%%%%%%%%%%%%%%%%%%%%%%%%%%%%%%%%%%%%%%
%%%%%%%%%%%%%%%%%%%%%%%%%%%%%%%%%%%%%%%%%%%%%%%%%%%%%%%%%%%%
%%%%%%%%%%%%%%%%%%%%%%%%%%%%%%%%%%%%%%%%%%%%%%%%%%%%%%%%%%%%
%%%%%%%%%%%%%%%%%%%%%%%%%%%%%%%%%%%%%%%%%%%%%%%%%%%%%%%%%%%%
%%%%%%%%%%%%%%%%%%%%%%%%%%%%%%%%%%%%%%%%%%%%%%%%%%%%%%%%%%%%
%%%%%%%%%%%%%%%%%%%%%%%%%%%%%%%%%%%%%%%%%%%%%%%%%%%%%%%%%%%%

\section{Nonexistence results}\label{se:trivial}

{\bf Proof of Theorem \ref{teor1}:}\\ 

$(i)$ Indeed, by $f(0)=0$ and Lemma \ref{prop1}$(ii)$ we have $f(s)s\geq 0$ for all $s\in\R$. Thus, if $u$ is a solution, then
$$
\|u\|^{2}= \lambda\int_{\Omega}f'(u)\left|f(u)\right|^{q-2}f(u)udx+\mu\int_{\Omega}f'(u)\left|f(u)\right|^{p-2}f(u)u dx\leq 0.
$$
Therefore $u=0$. 

\medskip

$(ii)$ Suppose that $\lambda<0$ and $u$ is a nontrivial weak solution of \eqref{P'}. By previous item, we have $\mu>0$. By Lemma \ref{prop1}$(v)$,
\begin{equation}\label{vale}
\lambda\int_{\Omega}\left|f(u)\right|^{q}dx+\frac{\mu}{2}\int_{\Omega}\left|f(u)\right|^{p} dx< \|u\|^{2}.
\end{equation}
%Consequently,
%\begin{equation}\label{vale}
%-\frac{2}{p}\|u\|^{2}+\frac{2\lambda}{p}\int_{\Omega}\left|f(u)\right|^{q}dx+\frac{\mu}{p}\int_{\Omega}\left|f(u)\right|^{p} dx\leq 0.
%\end{equation}
If $J_{\lambda, \mu}(u)\leq 0$, then
$$
\frac{1}{2}\|u\|^{2}-\frac{\lambda}{q}\int_{\Omega}\left|f(u)\right|^{q}dx-\frac{\mu}{p}\int_{\Omega}\left|f(u)\right|^{p} dx\leq 0.
$$
Thus,
\begin{equation}\label{venga}
\|u\|^{2}\leq \frac{2\lambda}{q}\int_{\Omega}\left|f(u)\right|^{q}dx+\frac{2\mu}{p}\int_{\Omega}\left|f(u)\right|^{p} dx.
\end{equation}
By comparing \eqref{vale} and \eqref{venga}, we get
$$
0\leq \lambda\left(1-\frac{2}{q}\right)\int_{\Omega}\left|f(u)\right|^{q}dx+\mu\left(\frac{1}{2}-\frac{2}{p}\right)\int_{\Omega}\left|f(u)\right|^{p}dx< 0,
$$
whenever $1<q\leq 2$ and $p\geq 4$. A clear contradiction.

\medskip

Now, let $\mu<0$ and $u$ be a weak solution of \eqref{P'}. Again, by item $(i)$, we have $\lambda>0$. By Lemma \ref{prop1}$(v)$,
\begin{equation}\label{valeya}
\|u\|^{2}< \lambda\int_{\Omega}\left|f(u)\right|^{q}dx+\frac{\mu}{2}\int_{\Omega}\left|f(u)\right|^{p} dx.
\end{equation}
%Thence,
%-\frac{1}{q}\|u\|^{2}+\frac{\lambda}{q}\int_{\Omega}\left|f(u)\right|^{q}dx+\frac{\mu}{2q}\int_{\Omega}\left|f(u)\right|^{p} dx\geq 0.
If $J_{\lambda, \mu}(u)\geq 0$, then
$$
\frac{1}{2}\|u\|^{2}-\frac{\lambda}{q}\int_{\Omega}\left|f(u)\right|^{q}dx-\frac{\mu}{p}\int_{\Omega}\left|f(u)\right|^{p} dx\geq 0.
$$
Thus,
\begin{equation}\label{vengaya}
\frac{2\lambda}{q}\int_{\Omega}\left|f(u)\right|^{q}dx+\frac{2\mu}{p}\int_{\Omega}\left|f(u)\right|^{p} dx\leq\|u\|^{2}.
\end{equation}
Comparing \eqref{valeya} and \eqref{vengaya}, we get
$$
0< \lambda\left(1-\frac{2}{q}\right)\int_{\Omega}\left|f(u)\right|^{q}dx+\mu\left(\frac{1}{2}-\frac{2}{p}\right)\int_{\Omega}\left|f(u)\right|^{p}dx\leq 0.
$$
for all $1<q\leq 2$ and $p\geq 4$. The result follows.

\medskip

$(iii)$ If $\max\{2, q\}<p\leq 4$, $\lambda<0$ and $u$ is a nontrivial weak solution of \eqref{P'}, then, by $f(0)=0$ and Lemma \ref{prop1}$(ii)$, $f(s)s\geq 0$ for all $s\in\R$. Moreover, by item $(i)$, we have $\mu>0$. Thence,
$$
\|u\|^{2}\leq \mu\int_{\Omega}f'(u)\left|f(u)\right|^{p-1}|u| dx.
$$
By Lemma \ref{prop1}$(v)$, 
\begin{equation}\label{conti}
\|u\|^{2}\leq \mu\int_{\Omega}\left|f(u)\right|^{p} dx.
\end{equation}
It follows from items $(v)$ and $(vi)$ of Lemma \ref{prop1} that
$$
|f(s)|\leq (8/\alpha^{2})^{1/4}|s|^{1/2},
$$
for all $|s|>1$. Thus, by Lemma \ref{prop1}$(iv)$ and since $2\leq p\leq 4$,
\begin{eqnarray}\nonumber\label{finit}
\int_{\Omega}\left|f(u)\right|^{p} dx&\leq& \int_{[|u|\leq 1]}|u|^{p}dx+ (8/\alpha^{2})^{p/4}\int_{[|u|>1]}\left|u\right|^{p/2} dx\\ \nonumber
&\leq & \int_{[|u|\leq 1]}|u|^{2}dx+ (8/\alpha^{2})^{p/4}\int_{[|u|>1]}\left|u\right|^{2} dx\\
&\leq & [1+(8/\alpha^{2})^{p/4}]\int_{\Omega}\left|u\right|^{2} dx.
\end{eqnarray}
By \eqref{conti}, \eqref{finit} and Sobolev embeddings,
\begin{equation}\label{defi}
\|u\|^{2}\leq \mu[1+(8/\alpha^{2})^{p/4}]|u|_{2}^{2}\leq \mu[1+(8/\alpha^{2})^{p/4}]C_{1}\|u\|^{2}.
\end{equation}
Since $u$ is a nontrivial solution, we obtain 
\begin{equation}\label{navi}
0<\frac{1}{[1+(8/\alpha^{2})^{p/4}]C_{1}}=:\mu_{\ast}\leq\mu.
\end{equation}
To prove the second part, suppose that $\lambda>0$ and $u$ is a nontrivial solution with $J_{\lambda, \mu}(u)\geq 0$. It follows from Lemma \ref{prop1}$(v)$ that
\begin{eqnarray*}
\|u\|^{2}&\leq& \lambda\int_{\Omega}\left|f(u)\right|^{q} dx+|\mu|\int_{\Omega}\left|f(u)\right|^{p} dx\\
&\leq& \frac{q}{2}\|u\|^{2}+|\mu|\left(1+\frac{q}{p}\right)\left|f(u)\right|^{p} dx
\end{eqnarray*}
Consequently,
$$
\left(1-\frac{q}{2}\right)\|u\|^{2}\leq |\mu|\left(1+\frac{q}{p}\right)\left|f(u)\right|^{p} dx.
$$
As $2\leq p\leq 4$, by \eqref{finit},
$$
\left(1-\frac{q}{2}\right)\|u\|^{2}\leq |\mu|\left(1+\frac{q}{p}\right)[1+(8/\alpha^{2})^{p/4}]C_{1}\|u\|^{2}.
$$
Since $1<q<2$, we have
$$
0<\frac{\left(1-\frac{q}{2}\right)}{\left(1+\frac{q}{p}\right)[1+(8/\alpha^{2})^{p/4}]C_{1}}\leq |\mu|.
$$
The result is proven. 

\medskip

$(iv)$ Let $2\leq q<4$, $\mu<0$ and $u$ be a nontrivial weak solution of \eqref{P'}, by Lemma \ref{prop1}$(v)$
$$
\|u\|^{2}\leq \lambda\int_{\Omega}\left|f(u)\right|^{q} dx.
$$
By item $(i)$, \eqref{finit} and Sobolev embeddings,
\begin{equation}\label{defi}
\|u\|^{2}\leq \lambda[1+(8/\alpha^{2})^{q/4}]C_{1}\|u\|^{2}.
\end{equation}
Since $u$ is a nontrivial solution, we obtain 
\begin{equation}\label{navi}
0<\frac{1}{[1+(8/\alpha^{2})^{q/4}]C_{1}}=:\lambda_{\ast}\leq\lambda.
\end{equation}

\medskip

Finally, suppose that $\mu>0$ and $u$ is a nontrivial solution with $J_{\lambda, \mu}(u)\leq 0$. It follows from Lemma \ref{prop1}$(v)$ that
\begin{eqnarray*}
\|u\|^{2}&\geq& -|\lambda|\int_{\Omega}\left|f(u)\right|^{q} dx+\frac{\mu}{2}\int_{\Omega}\left|f(u)\right|^{p} dx\\
&\geq& \frac{p}{4}\|u\|^{2}-|\lambda|\left(1+\frac{p}{2q}\right)\int_{\Omega}\left|f(u)\right|^{q} dx.
\end{eqnarray*}
Since $p<4$,
$$
0<\left(1-\frac{p}{4}\right)\|u\|^{2}\leq |\lambda|\left(1+\frac{p}{2q}\right)\int_{\Omega}\left|f(u)\right|^{q} dx.
$$
Since $2\leq q<4$, by \eqref{finit}
$$
\left(1-\frac{p}{4}\right)\|u\|^{2}\leq |\lambda|\left(1+\frac{p}{2q}\right)[1+(8/\alpha^{2})^{q/4}]C_{1}\|u\|^{2}.
$$
Therefore
$$
0<\frac{\left(1-\frac{p}{4}\right)}{\left(1+\frac{p}{2q}\right)[1+(8/\alpha^{2})^{q/4}]C_{1}}\leq |\lambda|.
$$

$(v)$ Let $2\leq q<p\leq 4$ and $u$ be a nontrivial weak solution of \eqref{P'}. By Lemma \ref{prop1}$(v)$ and \eqref{finit},
$$
\|u\|^{2}\leq |\lambda|\int_{\Omega}|f(u)|^{q}dx+|\mu|\int_{\Omega}|f(u)|^{p}dx\leq \left[|\lambda|[1+(8/\alpha^{2})^{p/4}]C_{1}+|\mu|[1+(8/\alpha^{2})^{p/4}]C_{2}\right]\|u\|^{2}.
$$
Since $u$ is nontrivial, the result follows.
$\square$

%%%%%%%%%%%%%%%%%%%%%%%%%%%%%%%%%%%%%%%%%%%%%%%%%%%%%%%%%%%%%%%%%%%%%%%%%%%%

\section{Multiplicity of solutions}\label{sec:ult}

The proof of the existence results will be divided in several propositions. Before, we need to introduce some definitions. We say that $J_{\lambda, \mu}$ satisfies the $(PS)_{c}^{\ast}$ condition, with respect to $\{Y_{n}\}$, if any sequence $\{u_{n}\}\subset H_{0}^{1}(\Omega)$, such that
\begin{equation}\label{seque}
u_{n}\in Y_{n}, J_{\lambda, \mu}(u_{n})\to c \ \mbox{and} \ (J_{\lambda, \mu}|_{Y_{n}})'(u_{n})\to 0
\end{equation}
contains a subsequence converging to a critical point of $ J_{\lambda, \mu}$. Any sequence $\{u_{n}\}\subset H_{0}^{1}(\Omega)$ satisfying \eqref{seque} is said to be a $(PS)_{c}^{\ast}$ for $J_{\lambda, \mu}$. It is well known that the $(PS)_{c}^{\ast}$ condition implies the classical $(PS)_{c}$ condition, see \cite{Wil}.

\begin{proposition}\label{prop30}
Suppose $(\vartheta_{1})-(\vartheta_3)$ hold. 
\begin{enumerate}
\item[$(i)$] If $p=4$, then $J_{\lambda, \mu}$ satisfies the $(PS)_{c}^{\ast}$ condition, for all $1<q<4$, $\lambda\in\R$ and $\mu<\lambda_{1}\alpha^{2}/4$;
\item[$(ii)$] If $p\neq 4$, then $J_{\lambda, \mu}$ satisfies the $(PS)_{c}^{\ast}$ condition, for all $1<q<\min\{4, p\}$ and $\lambda, \mu\in\R$.
\end{enumerate} 
\end{proposition}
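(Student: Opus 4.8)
The plan is to show that every $(PS)_c^{\ast}$ sequence $\{u_n\}$ --- i.e.\ $u_n\in Y_n$ with $J_{\lambda,\mu}(u_n)\to c$ and $(J_{\lambda,\mu}|_{Y_n})'(u_n)\to 0$ --- is bounded in $H_0^1(\Omega)$, then pass to a weak limit and upgrade it to strong convergence; the limit will automatically be a critical point. Boundedness is where the three regimes differ, and the key algebraic inputs are Lemma \ref{prop1}$(v)$ in the form
\[
\tfrac{1}{2}|f(s)|^{r}\le f'(s)|s|\,|f(s)|^{r-1}<|f(s)|^{r}\qquad(r>1),
\]
obtained by multiplying $|f(s)|/2\le f'(s)|s|<|f(s)|$ by $|f(s)|^{r-1}$, together with the growth bound $\int_\Omega|f(u)|^{r}\,dx\le C(1+\|u\|^{r/2})$ for $1<r<2\cdot 2^{\ast}$, which follows from $|f(s)|\le|s|$ and $|f(s)|\lesssim|s|^{1/2}$ at infinity (Lemma \ref{prop1}$(iv)$--$(vi)$) and the subcritical Sobolev embedding.

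For the boundedness itself I would distinguish the regimes. When $2<p<4$ (the subcase $p<4$ of $(ii)$) both nonlinear terms are effectively subquadratic (exponents $q/2,\,p/2<2$), so the growth bound gives $J_{\lambda,\mu}(u)\ge\tfrac12\|u\|^{2}-C\bigl(1+\|u\|^{q/2}+\|u\|^{p/2}\bigr)$, which is coercive for all $\lambda,\mu\in\R$. When $p=4$ the $p$-term is exactly quadratic: since $s\mapsto|f(s)|/\sqrt{|s|}$ is nondecreasing with limit $(8/\alpha^{2})^{1/4}$ (Lemma \ref{prop1}$(v)$--$(vi)$) we have $\int_\Omega|f(u)|^{4}\,dx\le\tfrac{8}{\alpha^{2}}|u|_2^{2}$, and with Poincar\'e's inequality $\lambda_1|u|_2^{2}\le\|u\|^{2}$ the quadratic part of $J_{\lambda,\mu}$ dominates $\bigl(\tfrac12-\tfrac{2\mu}{\alpha^{2}\lambda_1}\bigr)\|u\|^{2}$, whose coefficient is positive exactly under $\mu<\lambda_1\alpha^{2}/4$; coercivity follows. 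Finally, for $p>4$ with $\mu\le 0$ the $p$-term has a favourable sign and $J_{\lambda,\mu}$ is again coercive, while for $p>4$ and $\mu>0$ coercivity fails and I would instead estimate $J_{\lambda,\mu}(u_n)-\tfrac{2}{p}\langle J_{\lambda,\mu}'(u_n),u_n\rangle$: the displayed inequality with $r=p$ makes the $\mu$-contribution nonnegative, the $\lambda$-contribution is $O(1+\|u_n\|^{q/2})$, and $\tfrac12-\tfrac2p>0$; using $\langle J_{\lambda,\mu}'(u_n),u_n\rangle=\langle(J_{\lambda,\mu}|_{Y_n})'(u_n),u_n\rangle=o(1)\|u_n\|$ this bounds $\|u_n\|$.

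With $\{u_n\}$ bounded I extract $u_n\rightharpoonup u$ in $H_0^1(\Omega)$, $u_n\to u$ in $L^{r}(\Omega)$ for $r<2^{\ast}$ and a.e.\ in $\Omega$. Because $p<2\cdot 2^{\ast}$, the bound $|f(s)|\lesssim|s|^{1/2}$ renders the Nemytskii maps $u\mapsto f'(u)|f(u)|^{q-2}f(u)$ and $u\mapsto f'(u)|f(u)|^{p-2}f(u)$ compact into the relevant dual, so the nonlinear part of $J_{\lambda,\mu}'$ is a compact operator. To see $u$ is critical, fix $w\in Y_m$; for $n\ge m$ one has $\langle J_{\lambda,\mu}'(u_n),w\rangle=\langle(J_{\lambda,\mu}|_{Y_n})'(u_n),w\rangle\to0$, while weak convergence plus compactness give $\langle J_{\lambda,\mu}'(u_n),w\rangle\to\langle J_{\lambda,\mu}'(u),w\rangle$; density of $\bigcup_m Y_m$ then yields $J_{\lambda,\mu}'(u)=0$. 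For strong convergence let $P_n$ be the orthogonal projection onto $Y_n$, so $P_nu\to u$ and $u_n-P_nu\in Y_n$. Testing with $u_n-P_nu$ gives
\[
\langle J_{\lambda,\mu}'(u_n),u_n-P_nu\rangle=\langle(J_{\lambda,\mu}|_{Y_n})'(u_n),u_n-P_nu\rangle=o(1),
\]
and since the nonlinear terms paired against $u_n-P_nu\rightharpoonup 0$ vanish by compactness, the linear part forces $\|u_n\|^{2}-\langle u_n,P_nu\rangle\to0$, hence $\|u_n\|^{2}\to\|u\|^{2}$. Combined with $u_n\rightharpoonup u$ this gives $u_n\to u$ in $H_0^1(\Omega)$, completing the verification of $(PS)_c^{\ast}$.

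The step I expect to demand the most care is boundedness in the superquadratic regime $p>4$, $\mu>0$, and above all the threshold case $p=4$, where the quadratic nonlinearity forces one to track the sharp constant $8/\alpha^{2}$ from Lemma \ref{prop1}$(vi)$ and balance it against Poincar\'e's inequality; it is precisely this bookkeeping that isolates the hypothesis $\mu<\lambda_1\alpha^{2}/4$. By contrast, the compactness of the nonlinear operator is routine once one notes that the change of variables makes the problem subcritical, the effective exponents $q/2$ and $p/2$ lying below $2^{\ast}$ thanks to $p<2\cdot 2^{\ast}$.
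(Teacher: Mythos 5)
Your proof is correct, and it shares the paper's overall skeleton: prove boundedness of a $(PS)_c^{\ast}$ sequence, extract a weak limit, and upgrade to strong convergence by testing $(J_{\lambda,\mu}|_{Y_n})'(u_n)$ against $u_n-P_{Y_n}u$ and using compactness of the subcritical nonlinear terms; that final step is essentially identical to the paper's. Where you genuinely diverge is the boundedness step. The paper splits into four cases according to the signs of $\lambda$ and $\mu$ and in every case estimates an Ambrosetti--Rabinowitz-type combination $J_{\lambda,\mu}(u_n)-\theta\,(J_{\lambda,\mu}|_{Y_n})'(u_n)u_n$ (with $\theta=1/p$, $2/p$ or $1/4$), using Lemma \ref{prop1}$(v)$ to absorb the derivative terms; the threshold $\mu<\lambda_1\alpha^2/4$ emerges there as the coefficient $\frac14-\frac{\mu}{\lambda_1\alpha^2}$. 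You instead observe that $J_{\lambda,\mu}$ itself is coercive in every regime except $p>4$, $\mu>0$: the growth bound $\int_\Omega|f(u)|^r\,dx\le C(1+\|u\|^{r/2})$ makes both nonlinear terms subquadratic when $p<4$; for $p=4$ the sharp pointwise bound $|f(s)|\le(8/\alpha^2)^{1/4}|s|^{1/2}$ combined with Poincar\'e's inequality yields the quadratic coefficient $\frac12-\frac{2\mu}{\alpha^2\lambda_1}$, positive precisely when $\mu<\lambda_1\alpha^2/4$ --- the same threshold as the paper's, obtained with no sign analysis at all; and only for $p>4$, $\mu>0$ do you fall back on the combination $J_{\lambda,\mu}-\frac2p\langle J_{\lambda,\mu}',\cdot\rangle$, exactly as the paper does. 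Your route buys a cleaner, more unified argument (no fourfold case split, and coercivity is a stronger conclusion than mere boundedness of $(PS)$ sequences), while the paper's uniform AR-type computation is the more traditional one and would also work in settings where coercivity fails but the AR structure persists. A further small point in your favor: you explicitly verify that the weak limit is a critical point of the full functional $J_{\lambda,\mu}$ via density of $\bigcup_m Y_m$, a step the definition of $(PS)_c^{\ast}$ requires but which the paper leaves implicit.
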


\begin{proof}
$(i)$ Let $p=4$ and $\{u_{n}\}$ be a $(PS)_{c}^{\ast}$ sequence for $J_{\lambda, \mu}$, i.e., \eqref{seque} holds. If $\lambda>0$ and $\mu\leq 0$, it follows by Lemma \ref{prop1}$(v)$ that  
$$
 C+C_{0}\|u_{n}\|\geq J_{\lambda, \mu}(u_{n})-\frac{1}{p}(J_{\lambda, \mu}|_{Y_{n}})'(u_{n})u_{n}\geq\left(\frac{1}{2}-\frac{1}{p}\right)\|u_{n}\|^{2}-\lambda\left(\frac{1}{q}-\frac{1}{2p}\right)\int_{\Omega}\left|f(u_{n})\right|^{q} dx,
$$
Now, we have to consider two cases: if $1<q\leq 2$, we conclude from Lemma \ref{prop1}$(iv)$ and Sobolev embedding that
\begin{equation}\label{AAA}
C+C_{0}\|u_{n}\|\geq\left(\frac{1}{2}-\frac{1}{p}\right)\left\|u_{n}\right\|^{2}-\lambda\left(\frac{1}{q}-\frac{1}{2p}\right)C_{1}\|u_{n}\|^{q}.
\end{equation}

Before consider the case $2<q<4$, observe that, we cannot use the Lemma \ref{prop1}$(iv)$ in the same way as previously because $|u|^{q}$ might not be integrable. To overcome this difficulty, we note that, by items $(v)$ and $(vi)$ of Lemma \ref{prop1} 
\begin{equation}\label{mus}
|f(s)|\leq (8/\alpha^{2})^{1/4}|s|^{1/2},
\end{equation}
for all $s\in\R$. By Lemma \ref{prop1}$(iv)$, for each $2\leq r\leq 22^{\ast}$,
\begin{equation}\label{fini}
\int_{\Omega}\left|f(u)\right|^{r} dx\leq  (8/\alpha^{2})^{r/4}\int_{\Omega}\left|u\right|^{r/2} dx.
%&\leq& (8/\alpha^{2})^{r/4}\int_{[|u|>1]}\left|u\right|^{r/2} dx+ \int_{[|u|\leq 1]}|u|^{r}dx\\\nonumber
%&\leq & (8/\alpha^{2})^{r/4}\int_{[|u|>1]}\left|u\right|^{r/2} dx+ \int_{[|u|\leq 1]}|u|^{r/2}dx\\
\end{equation}

Thus, if $2<q<4$, it follows from \eqref{fini} and Sobolev embedding that
\begin{equation}\label{BBB}
C+C_{0}\|u_{n}\|\geq\left(\frac{1}{2}-\frac{1}{p}\right)\left\|u_{n}\right\|^{2}-\lambda\left(\frac{1}{q}-\frac{1}{2p}\right)(8/\alpha^{2})^{q/4}C_{1}\|u_{n}\|^{q/2}.
\end{equation}
By \eqref{AAA} and \eqref{BBB}, $\{u_{n}\}$ is bounded in $H_{0}^{1}(\Omega)$.  If $\lambda, \mu>0$, by Lemma \ref{prop1}$(v)$, \eqref{fini} and Sobolev embedding, we have
\begin{eqnarray*}
C+C_{0}\|u_{n}\|&\geq& J_{\lambda, \mu}(u_{n})-\frac{1}{4}(J_{\lambda, \mu}|_{Y_{n}})'(u_{n})u_{n}\\
&\geq& \left(\frac{1}{4}-\frac{\mu}{\lambda_{1}\alpha^{2}}\right)\|u\|^{2}-\lambda\left(\frac{1}{q}-\frac{1}{8}\right)\int_{\Omega}\left|f(u_{n})\right|^{q} dx.
\end{eqnarray*}
Hence $\{u_{n}\}$ is bounded in $H_{0}^{1}(\Omega)$, if $\mu<\lambda_{1}\alpha^{2}/4$.

\medskip

On the other hand, if $\lambda, \mu\leq 0$ we get
$$
 C+C_{0}\|u_{n}\|\geq J_{\lambda, \mu}(u_{n})-\frac{1}{p}(J_{\lambda, \mu}|_{Y_{n}})'(u_{n})u_{n}\geq\left(\frac{1}{2}-\frac{1}{p}\right)\|u_{n}\|^{2}-\lambda\left(\frac{1}{q}-\frac{1}{p}\right)\int_{\Omega}\left|f(u_{n})\right|^{q} dx,
$$
showing that $\{u_{n}\}$ is bounded in $H_{0}^{1}(\Omega)$. If $\lambda\leq 0$ and $\mu>0$,
\begin{eqnarray*}
C+C_{0}\|u_{n}\|&\geq& J_{\lambda, \mu}(u_{n})-\frac{1}{4}(J_{\lambda, \mu}|_{Y_{n}})'(u_{n})u_{n}\\
&\geq& \left[\frac{1}{4}-\frac{\mu}{\lambda_{1}\alpha^{2}}\right]\|u\|^{2}-\lambda\left(\frac{1}{q}-\frac{1}{4}\right)\int_{\Omega}\left|f(u_{n})\right|^{q} dx.
\end{eqnarray*}
Therefore $\{u_{n}\}$ is again bounded in $H_{0}^{1}(\Omega)$, if $\mu<\lambda_{1}\alpha^{2}/4$. Thence, up to a subsequence, we have
\begin{equation}\label{weak}
u_{n}\rightharpoonup u \ \mbox{in} \ H_{0}^{1}(\Omega),
\end{equation}
\begin{equation}\label{eqcap31}
\int_{\Omega}f'(u_{n})\left|f(u_{n})\right|^{q-2}f(u_{n})(u_{n}-u) dx\to 0
\end{equation}
and
\begin{equation}\label{eqcap32}
\int_{\Omega}f'(u_{n})\left|f(u_{n})\right|^{p-2}f(u_{n})(u_{n}-u) dx\to 0.
\end{equation}

Defining $v_{n}:=P_{Y_{n}}u$ as been the orthogonal projection of $u$ onto $Y_{n}$, we have 
\begin{equation}\label{coffee}
v_{n}\to u \ \mbox{in} \ H_{0}^{1}(\Omega).
\end{equation}
Since $u_{n}-v_{n}\in Y_{n}$ and $\{u_{n}-v_{n}\}$ is bounded in $H_{0}^{1}(\Omega)$, we conclude that
$$
(J_{\lambda, \mu}|_{Y_{n}})'(u_{n})(u_{n}-v_{n})=o_{n}(1).
$$
Thence,
\begin{eqnarray*}
&&\int_{\Omega}\nabla u_{n}\nabla (u_{n}-v_{n})=\\
&&\lambda\int_{\Omega}f'(u_{n})\left|f(u_{n})\right|^{q-2}f(u_{n})(u_{n}-v_{n}) dx+\mu\int_{\Omega}f'(u_{n})\left|f(u_{n})\right|^{p-2}f(u_{n})(u_{n}-v_{n}) dx+o_{n}(1).
\end{eqnarray*}
By \eqref{weak}, (\ref{eqcap31}), (\ref{eqcap32}) and \eqref{coffee}, we conclude that
\begin{equation}\label{ulti}
\|u_{n}\|^{2}=\|v_{n}\|^{2}+o_{n}(1).
\end{equation}
The result follows now from \eqref{weak} and \eqref{coffee}.

\medskip

$(ii)$ Let $p\neq 4$ and $\{u_{n}\}$ be a $(PS)_{c}^{\ast}$ sequence for $J_{\lambda, \mu}$. If $\lambda>0$ and $\mu\leq 0$ we can reason exactly like in the case $p=4$. On the other hand, if $\lambda, \mu>0$ we have to consider separately two cases: if $p<4$, it follows by Lemma \ref{prop1}$(v)$, \eqref{fini} and Sobolev embedding that  
\begin{eqnarray*}
 C+C_{0}\|u_{n}\|&\geq&J_{\lambda, \mu}(u_{n})-\frac{1}{p}(J_{\lambda, \mu}|_{Y_{n}})'(u_{n})u_{n}\\
 &\geq&\left(\frac{1}{2}-\frac{1}{p}\right)\|u_{n}\|^{2}-\frac{\mu}{2p}(8/\alpha^{2})^{p/4}C_{1}\|u_{n}\|^{p/2}-\lambda\left(\frac{1}{q}-\frac{1}{2p}\right)\int_{\Omega}\left|f(u_{n})\right|^{q} dx.
\end{eqnarray*}
By estimating the last installment as \eqref{AAA} and \eqref{BBB} we conclude that $\{u_{n}\}$ is bounded in $H_{0}^{1}(\Omega)$. In the case $p> 4$, it is sufficient to note that, by Lemma \ref{prop1}$(v)$ 
$$
 C+C_{0}\|u_{n}\|\geq J_{\lambda, \mu}(u_{n})-\frac{2}{p}(J_{\lambda, \mu}|_{Y_{n}})'(u_{n})u_{n}\geq\left(\frac{1}{2}-\frac{2}{p}\right)\|u_{n}\|^{2}-\lambda\left(\frac{1}{q}-\frac{1}{p}\right)\int_{\Omega}\left|f(u_{n})\right|^{q} dx.
$$
Once more time the boundedness of $\{u_{n}\}$ in $H_{0}^{1}(\Omega)$ follows from a reasoning similar to \eqref{AAA} and \eqref{BBB}. 

\medskip

Finally, if $\lambda, \mu\leq 0$, we argue exactly like in the case $p=4$ and, if $\lambda\leq 0$ and $\mu>0$, we have
\begin{eqnarray*}
 C+C_{0}\|u_{n}\|&=&J_{\lambda, \mu}(u_{n})-\frac{1}{p}(J_{\lambda, \mu}|_{Y_{n}})'(u_{n})u_{n}\\
 &\geq&\left(\frac{1}{2}-\frac{1}{p}\right)\|u_{n}\|^{2}-\frac{\mu}{2p}(8/\alpha^{2})^{p/4}C_{1}\|u_{n}\|^{p/2}-\lambda\left(\frac{1}{q}-\frac{1}{p}\right)\int_{\Omega}\left|f(u_{n})\right|^{q} dx,
\end{eqnarray*}
when $p<4$, and 
$$
 C+C_{0}\|u_{n}\|\geq J_{\lambda, \mu}(u_{n})-\frac{2}{p}(J_{\lambda, \mu}|_{Y_{n}})'(u_{n})u_{n}\geq\left(\frac{1}{2}-\frac{2}{p}\right)\|u_{n}\|^{2}-\lambda\left(\frac{1}{q}-\frac{2}{p}\right)\int_{\Omega}\left|f(u_{n})\right|^{q} dx,
$$
when $p>4$. In all cases we can conclude that $\{u_{n}\}$ is bounded in $H_{0}^{1}(\Omega)$. Now the result follows exactly equal to the case $p=4$.

\medskip

%$(ii)$ Let $\{u_{n}\}$ be a $(PS)_{c}$ sequence for $J_{\lambda, \mu$. By items $(ii)$ and $(v)$ of Lemma \ref{prop1} 
%$$
%\varphi_{n}:=f(u_{n})/f'(u_{n})\leq 2|u_{n}|.
%$$
%Consequently,
%$$
%\left|\varphi_{n}\right|_{2}\leq 2\left|u_{n}\right|_{2}.
%$$ 
%Moreover, 
%$$
%\nabla\varphi_{n}=\left(1-\frac{f''(u_{n})f(u_{n})}{f'(u_{n})^{2}}\right)\nabla u_{n}.
%$$
%By $(\vartheta_{1})-(\vartheta_{2})$, we get
%$$
%\left|\nabla\varphi_{n}\right|=\left(1-\frac{f''(u_{n})f(u_{n})}{f'(u_{n})^{2}}\right)|\nabla u_{n}|\leq 2|\nabla u_{n}|.
%$$
%Therefore $\varphi_{n}\in H_{0}^{1}(\Omega)$ and 
%$$
%\|\varphi_{n}\|\leq 2\|u_{n}\|.
%$$
%It follows that
%\begin{eqnarray*}
% C+C_{0}\|u_{n}\|&\geq&J_{\lambda, \mu}(u_{n})-\frac{1}{p}J_{\lambda, \mu}(u_{n})\varphi_{n}\\
% &\geq & \int_{\Omega}\left(\frac{1}{2}-\frac{1}{p}+\frac{f''(u_{n})f(u_{n})}{pf'(u_{n})^{2}}\right)|\nabla u_{n}|^{2} dx-|\lambda|\left(\frac{1}{q}-\frac{1}{p}\right)\int_{\Omega}\left|f(u_{n})\right|^{q} dx,
%\end{eqnarray*}
\end{proof}

\begin{proposition}\label{prop10} 

Suppose $(\vartheta_{1})-(\vartheta_3)$, $4<p<22^{\ast}$ and $\mu>0$. Then there exist $0<r_{k}<\rho_{k}$ such that:
\begin{equation}\label{eti1}
\max_{u\in Y_{k}, \|u\|=\rho_{k}}J_{\lambda, \mu}(u)\leq 0.
\end{equation}
and 
\begin{equation}\label{eti2}
\inf_{u\in Z_{k}, \left\|u\right\|=r_{k}}J_{\lambda, \mu}(u)\to \infty \ \mbox{as} \ k\to\infty.
\end{equation}
\end{proposition}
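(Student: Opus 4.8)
The plan is to establish the two geometric inequalities \eqref{eti1} and \eqref{eti2} independently; together with the $(PS)^\ast_c$ condition from Proposition \ref{prop30} and the evenness of $J_{\lambda,\mu}$, these are precisely the hypotheses of a Fountain-type theorem, so all the real work sits in these two estimates. Throughout I would write $u=sw$ with $w\in S_k$ and $s=\|u\|$, and exploit the two growth regimes of $f$ recorded in Lemma \ref{prop1}: the bound $|f(s)|\le|s|$ near the origin and the uniform estimate $|f(s)|\le(8/\alpha^2)^{1/4}|s|^{1/2}$ of \eqref{mus}, valid for every $s\in\R$.

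For \eqref{eti1} I first bound the superquadratic term from below on the finite-dimensional space $Y_k$. Since $t\mapsto|f(t)|/\sqrt{|t|}$ is nondecreasing on $(0,\infty)$ by Lemma \ref{prop1}$(v)$, on the set $[1<|sw|]$ one has $|f(sw)|\ge f(1)\sqrt{s}\,\sqrt{|w|}$, so that $\int_\Omega|f(sw)|^p\,dx\ge f(1)^p s^{p/2}\int_{[1<|sw|]}|w|^{p/2}\,dx$. Because $p<22^\ast$ forces $p/2\in[1,2^\ast]$, Lemma \ref{lema2}$(ii)$ (that is, \eqref{terceiro}) bounds the last integral below by $\beta_k(p/2)>0$, \emph{uniformly in} $w\in S_k$, for all $s>\alpha_k$. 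For the concave term I split $\Omega=[|sw|\le1]\cup[|sw|>1]$ and use $|f(sw)|\le1$ on the first set and \eqref{mus} on the second, obtaining $\int_\Omega|f(sw)|^q\,dx\le|\Omega|+C_k s^{q/2}$ with $C_k$ uniform over $S_k$. Collecting terms gives, for $s>\alpha_k$ and every $w\in S_k$, the uniform bound $J_{\lambda,\mu}(sw)\le \tfrac12 s^2+\tfrac{|\lambda|}{q}(|\Omega|+C_k s^{q/2})-\tfrac{\mu}{p}f(1)^p\beta_k(p/2)\,s^{p/2}=:\phi_k(s)$. As $p/2>2>q/2$, one has $\phi_k(s)\to-\infty$, and choosing $\rho_k>\alpha_k$ large enough that $\phi_k(\rho_k)\le0$ yields \eqref{eti1}.

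For \eqref{eti2} the decisive point is that $p$ may exceed $2^\ast$, so $|f(u)|\le|u|$ is useless for the $p$-term; instead \eqref{mus} lowers both exponents, giving $\int_\Omega|f(u)|^p\,dx\le(8/\alpha^2)^{p/4}\int_\Omega|u|^{p/2}\,dx$ and $\int_\Omega|f(u)|^q\,dx\le(8/\alpha^2)^{q/4}\int_\Omega|u|^{q/2}\,dx$, where now $p/2<2^\ast$ (from $p<22^\ast$) and $q/2<2<2^\ast$ both lie inside the Sobolev range. Introducing $\gamma_k(r):=\sup_{u\in Z_k,\,\|u\|=1}|u|_r$, which tends to $0$ as $k\to\infty$ for every $r\in[1,2^\ast)$ (the standard fact from \cite{Wil}, following from $u_k\rightharpoonup0$ for norm-one $u_k\in Z_k$ together with the compact embedding), I get for $u\in Z_k$ that $J_{\lambda,\mu}(u)\ge\tfrac12\|u\|^2-C_\lambda\gamma_k(q/2)^{q/2}\|u\|^{q/2}-C_\mu\gamma_k(p/2)^{p/2}\|u\|^{p/2}$. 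Writing $\sigma_k:=\gamma_k(p/2)^{p/2}\to0$ and choosing $r_k:=(4C_\mu\sigma_k)^{-2/(p-4)}$ (so the $p$-term equals $\tfrac14 r_k^2$ and $r_k\to\infty$, using $p>4$), the quadratic term dominates both the $p$-term and, because $q/2<2$, the concave $q$-term, leaving $J_{\lambda,\mu}(u)\ge\tfrac18 r_k^2\to\infty$ for $k$ large, which is \eqref{eti2}. Finally $\rho_k$ is enlarged if needed so that $r_k<\rho_k$.

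The routine pieces are the Sobolev estimates and the optimization of $r_k$; the genuine difficulty is the interplay of the two growth regimes of $f$. The lower bound in \eqref{eti1} cannot come from a Sobolev embedding (which only produces upper bounds) and must be extracted from the finite-dimensional geometry of $Y_k$ via Lemma \ref{lema2}, whose entire purpose is to guarantee that on $S_k$ a uniform fraction of the $L^{p/2}$-mass survives on $[1<|sw|]$. Dually, \eqref{eti2} hinges on the square-root decay \eqref{mus} being exactly what pulls the possibly supercritical $p<22^\ast$ back to a subcritical $p/2<2^\ast$, so that $\gamma_k(p/2)\to0$ is available. Keeping every constant uniform in $w\in S_k$ in \eqref{eti1}, so that a single $\rho_k$ works for the whole sphere, is the one bookkeeping step demanding care.
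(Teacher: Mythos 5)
Your proof is correct and follows essentially the same route as the paper's: on $Y_k$ the lower bound for the convex term via $|f(s)|\geq f(1)|s|^{1/2}$ for $|s|>1$ together with Lemma \ref{lema2}$(ii)$ (uniformly over $S_k$, since $p/2\in[1,2^{\ast}]$), and on $Z_k$ the exponent-halving bound \eqref{mus} combined with $\theta_{p/2,k}\to 0$ from Lemma 3.8 in \cite{Wil} and the choice $r_k\sim\theta_{p/2,k}^{-p/(p-4)}\to\infty$. The only cosmetic differences are that you handle the concave term uniformly in $q$ by splitting $\Omega=[|sw|\leq 1]\cup[|sw|>1]$, where the paper instead distinguishes the cases $1<q\leq 2$ and $2<q<4$, and that for $1<q<2$ your quantity $\gamma_k(q/2)$ has exponent $q/2<1$, outside the stated range of Willem's lemma --- which is harmless, since at that point only boundedness of $\int_{\Omega}|u|^{q/2}dx$ (available by H\"older on the bounded domain $\Omega$), not its decay in $k$, is actually used.
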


\begin{proof}

To prove \eqref{eti1}, observe that by Lemma \ref{prop1}$(v)$
$$
|f(s)|\geq f(1)|s|^{1/2}, \ \mbox{if $|s|>1$}.
$$
Thus, for each $u\in S_{k}$ and $\rho>0$
$$
J_{\lambda, \mu}(\rho u)\leq\frac{1}{2}\rho^{2}+\frac{|\lambda|}{q}\int_{\Omega}|f(\rho u)|^{q}dx-\frac{\mu}{p}f(1)^{p}\rho^{p/2}\int_{[1<|\rho u|]}\left| u\right|^{p/2}dx.
$$
%By using continuous Sobolev embedding,
%\begin{equation}\label{ineq1}
%J_{\lambda, \mu}(\rho u)\leq\frac{1}{2}\rho^{2}+\frac{|\lambda|}{q}C\rho^{q}-\frac{\mu}{p}f(1)^{p}\rho^{p/2}\int_{[1<|\rho u|]}\left|u\right|^{p/2}dx.
%\end{equation}
By Lemma \ref{lema2}$(ii)$, there exist positive constants $\alpha_{k}, \beta_{k}(p/2)$ such that, for every $u\in S_{k}$ and $\rho>\alpha_{k}$, we get
\begin{equation}\label{ineq1}
J_{\lambda, \mu}(\rho u)\leq\frac{1}{2}\rho^{2}+\frac{|\lambda|}{q}\int_{\Omega}|f(\rho u)|^{q}dx-\frac{\mu}{p}f(1)^{p}\beta_{k}(p/2)\rho^{p/2}.
\end{equation}
Now, we are going to consider two cases: If $1<q\leq 2$, it follows from Lemma \ref{prop1}$(iv)$ and Sobolev embedding that
$$
J_{\lambda, \mu}(\rho u)\leq\frac{1}{2}\rho^{2}+\frac{|\lambda|}{q}C_{1}\rho^{q}-\frac{\mu}{p}f(1)^{p}\beta_{k}(p/2)\rho^{p/2}.
$$
Since $p>4$, choosing $\rho_{k}>\max\{1, [p(1/2+|\lambda|C_{1}/q)/\mu f(1)^{p}\beta_{k}(p/2)]^{2/(p-4)}\}$, we have
$$
J_{\lambda, \mu}(\rho_{k} u)\leq\left(\frac{1}{2}+\frac{|\lambda|}{q}C_{1}\right)\rho_{k}^{2}-\frac{\mu}{p}f(1)^{p}\beta_{k}(p/2)\rho_{k}^{p/2}<0,
$$
for all $u\in S_{k}$. On the hand, if $2<q<4$, by \eqref{ineq1}, \eqref{fini} and Sobolev embedding, we have
$$
J_{\lambda, \mu}(\rho u)\leq\frac{1}{2}\rho^{2}+\frac{|\lambda|}{q}(8/\alpha^{2})^{q/4}C_{1}\rho^{q/2}-\frac{\mu}{p}f(1)^{p}\beta_{k}(p/2)\rho^{p/2}.
$$
Therefore, choosing $\rho_{k}>\max\{1, [p(1/2+|\lambda|(8/\alpha^{2})^{q/4}C_{1}/q)/\mu f(1)^{p}\beta_{k}(p/2)]^{2/(p-4)}\}$, we have
$$
J_{\lambda, \mu}(\rho_{k} u)\leq\left[\frac{1}{2}+\frac{|\lambda|}{q}(8/\alpha^{2})^{q/4}C_{1}\right]\rho_{k}^{2}-\frac{\mu}{p}f(1)^{p}\beta_{k}(p/2)\rho_{k}^{p/2}<0,
$$
for all $u\in S_{k}$. This proves \eqref{eti1}.

\medskip

%On the other hand, if $p=4$, it follows from \eqref{ineq1} that
%$$
%J_{\lambda, \mu}(\rho u)\leq\left(\frac{1}{2}-\frac{\mu}{p}f(1)^{p}\tau\right)\rho^{2}+\frac{|\lambda|}{q}\int_{\Omega}|f(\rho u)|^{q}dx.
%$$
%Thence, choosing $\mu_{\ast}=p/2f(1)^{p}\tau$ the result follows again from Lemma \ref{prop1}$(vi)$.
%$$
%J_{\lambda, \mu}(\rho u)\leq\frac{1}{2}\rho^{2}+\frac{|\lambda|}{q}\int_{\Omega}|f(\rho u)|^{q}dx-\frac{\mu}{p}f(1)^{p}\beta,
%$$
%for all $u\in S_{k}$, $\rho\geq \rho_{\ast}>0$ and some $\beta>0$. Thence, fixing $\rho_{k}\geq \rho_{\ast}$, there exists $\mu_{k}$ such
%$$
%J_{\lambda, \mu}(\rho_{k} u)<0, 
%$$
%for all $u\in S_{k}$ and $\mu>\mu_{k}>0$. Finally, to prove $(c)$, by arguing as previously, we have
%$$
%J_{\lambda, \mu}(\rho u)\leq\frac{1}{2}\rho^{2}-\frac{\lambda}{q}f(1)^{q}\int_{[1<|\rho u|]}\left|\rho u\right|^{q/2}dx+\frac{\mu}{p}\int_{\Omega}\left|f(\rho u)\right|^{p}dx.
%$$
%By Lemma \ref{lema1}$(iii)$,
%$$
%J_{\lambda, \mu}(\rho u)\leq\frac{1}{2}\rho^{2}-\frac{\lambda}{q}f(1)^{q}\beta+\frac{\mu}{p}\int_{\Omega}\left|f(\rho u)\right|^{p}dx.
%$$
%for all $u\in S_{k}$, $\rho\geq \rho_{\ast}>0$ and some $\beta>0$. The result follows by fixing $\rho_{k}\geq \rho_{\ast}$ and choosing $\lambda$ large enough.

To prove \eqref{eti2}, note that for any $1\leq r< 2^{\ast}$, we can define
\begin{equation}\label{sup1}
\theta_{r, k}:=\sup_{u\in Z_{k}\backslash\{0\}}\frac{|u|_{r}}{\|u\|}.
\end{equation}
It is a straightforward consequence of compact Sobolev embeddings that 
\begin{equation}\label{coisa}
\theta_{r, k}\to 0 \ \mbox{as $k\to\infty$},
\end{equation}
see Lemma 3.8 in \cite{Wil}.
%If $\lambda\leq 0$, it follows from items $(iv)$ and $(vi)$ of Lemma \ref{prop1} and \eqref{sup1}, that
% \begin{eqnarray*}
%J_{\lambda, \mu}(u)&\geqslant &\frac{1}{2}\|u\|^{2}dx-\frac{\mu}{p}\int_{\Omega}\left|u\right|^{p}dx\\
%&\geqslant & \frac{1}{2}\left\|u\right\|^{2}-\frac{\mu}{p}\theta_{k}^{p}\left\|u\right\|^{p},
%\end{eqnarray*}
%for all $u\in Z_{k}$.
%Choosing $r_{k}=1/(\mu\theta_{k}^{p})^{1/(p-2)}$, it follows that $r_{k}\to\infty$ as $k\to\infty$. Thus, 
%\begin{equation}\label{eq315100}
%J_{\lambda, \mu}(u)\geqslant\left(\frac{1}{2}-\frac{1}{p}\right)r_{k}^{2},
%\end{equation}
%for all $u\in Z_{k}$ with $\left\|u\right\|=r_{k}$.
If $1<q< 2$, by Lemma \ref{prop1}$(iv)$ and \eqref{fini}
$$
J_{\lambda, \mu}(u)\geq \frac{1}{2}\|u\|^{2}-\frac{|\lambda|}{q}\int_{\Omega}\left|u\right|^{q} dx-\frac{\mu}{p}(8/\alpha^{2})^{p/4}\int_{\Omega}\left|u\right|^{p/2} dx,
$$
By Sobolev embeddings and \eqref{sup1},
$$
J_{\lambda, \mu}(u)\geq \frac{1}{2}\|u\|^{2}-\frac{|\lambda|}{q}C_{1}\|u\|^{q}-\frac{\mu}{p}(8/\alpha^{2})^{p/4}\theta_{p/2, k}^{p/2}\|u\|^{p/2},
$$
for all $u\in Z_{k}$. Since $1<q<2$, for $\left\|u\right\|\geqslant R_{\ast}$ with $R_{\ast}>0$ large enough,
$$
\frac{|\lambda|}{q}C_{1}\left\|u\right\|^{q}<\frac{1}{r}\left\|u\right\|^{2},
$$
for some $r>2p/(p-2)$. Thus, for $\left\|u\right\|\geqslant R_{\ast}$, we get
\begin{equation}\label{achim}
J_{\lambda, \mu}(u)\geqslant\left(\frac{1}{2}-\frac{1}{r}\right)\left\|u\right\|^{2}-\frac{\mu}{p}(8/\alpha^{2})^{p/4}\theta_{p/2, k}^{p/2}\|u\|^{p/2}.
\end{equation}
It follows from \eqref{coisa} that, by choosing $r_{k}=1/[\mu (8/\alpha^{2})^{p/4}\theta_{p/2, k}^{p/2}]^{2/(p-4)}$, there exists $k_{0}\in\N$ such that $r_{k}\geqslant R_{\ast}$ for all $k\geqslant k_{0}$. Therefore, 
\begin{equation}\label{eq3153}
J_{\lambda, \mu}(u)\geqslant\left(\frac{r-2}{2r}-\frac{1}{p}\right)r_{k}^{2},
\end{equation}
for all $u\in Z_{k}$ with $\left\|u\right\|=r_{k}$ and $k\geqslant k_{0}$. Since $r_{k}\to\infty$ as $k\to\infty$, the result follows. If $2\leq q<4$, it follows from \eqref{fini} that
$$
J_{\lambda, \mu}(u)\geq \frac{1}{2}\|u\|^{2}-\frac{|\lambda|}{q}(8/\alpha^{2})^{q/4}\int_{\Omega}\left|u\right|^{q/2} dx-\frac{\mu}{p}(8/\alpha^{2})^{p/4}\int_{\Omega}\left|u\right|^{p/2} dx,
$$
By Sobolev embeddings and \eqref{sup1},
$$
J_{\lambda, \mu}(u)\geq \frac{1}{2}\|u\|^{2}-\frac{|\lambda|}{q}(8/\alpha^{2})^{q/4}C_{1}\|u\|^{q/2}-\frac{\mu}{p}(8/\alpha^{2})^{p/4}\theta_{p/2, k}^{p/2}\|u\|^{p/2},
$$
Now, since $1\leq q/2<2$, we can proceed in an analogous way to the case $1<q<2$ for the choice of $r_{k}$. Since we can choose $\rho_{k}$ even greater, in order to have $\rho_{k}>r_{k}$, the result follows.

%On the other hand, if $p= 4$, it follows by \eqref{coisa} and \eqref{achim} that choosing, for example, $r_{k}=k$ the result follows.

\end{proof}

\begin{proposition}\label{prop20} 
Suppose that $\vartheta$ satisfies $(\vartheta_{1})-(\vartheta_{3})$, $1<q<2$ and $\lambda>0$ hold. Then, there exists $0<r_{k}<\rho_{k}$ such
\begin{enumerate}
\item[$(i)$] $\inf_{u\in Z_{k}, \left\|u\right\|=\rho_{k}}J_{\lambda, \mu}(u)\geq 0$;

\item[$(ii)$] $\max_{u\in Y_{k}, \|u\|=r_{k}}J_{\lambda, \mu}(u)< 0$;

\item[$(iii)$] $\inf_{u\in Z_{k}, \|u\|\leq \rho_{k}}J_{\lambda, \mu}(u)\to0$ as $k\to\infty$.
\end{enumerate}
\end{proposition}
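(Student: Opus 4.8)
The plan is to verify the three geometric hypotheses required by the dual fountain theorem, whose compactness input is the $(PS)_c^{\ast}$ condition of Proposition \ref{prop30}. The common ingredient is a lower estimate on the tail space: for $u\in Z_k$, using $|f(u)|\le|u|$ (Lemma \ref{prop1}$(iv)$) on the concave term, the estimate \eqref{fini} on the convex term, the Sobolev embeddings and the numbers $\theta_{r,k}$ of \eqref{sup1}, one obtains
$$J_{\lambda,\mu}(u)\ge\tfrac12\|u\|^2-\tfrac{\lambda}{q}\theta_{q,k}^{q}\|u\|^{q}-\tfrac{|\mu|}{p}(8/\alpha^2)^{p/4}\theta_{p/2,k}^{p/2}\|u\|^{p/2}=:h_k(\|u\|),$$
where, writing $A_k=\tfrac{\lambda}{q}\theta_{q,k}^{q}$ and $B_k=\tfrac{|\mu|}{p}(8/\alpha^2)^{p/4}\theta_{p/2,k}^{p/2}$, one has $h_k(t)=\tfrac12 t^2-A_kt^{q}-B_kt^{p/2}$ with $A_k,B_k\to0$ by \eqref{coisa}. (When $\mu\le0$ the convex term only helps, so the same bound holds.)

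To obtain $(ii)$ I would work on the finite-dimensional $Y_k$. For $0<s<\tau_k$ the set $[|su|<1]$ equals $\Omega$ for every $u\in S_k$ by \eqref{segundo}, so $|su|<1$ pointwise. Since $f''\le0$ on $(0,\infty)$ (Lemma \ref{prop1}$(i)$ together with $(\vartheta_{1})$), $f'$ is nonincreasing there, whence $|f(t)|\ge f'(1)|t|$ for $|t|\le1$. Combining this lower bound on the concave term with $|f(su)|\le|su|$ on the convex one gives, for $u\in S_k$ and $0<s<\tau_k$,
$$J_{\lambda,\mu}(su)\le\tfrac12 s^2-\tfrac{\lambda f'(1)^{q}}{q}\Big(\min_{w\in S_k}\int_\Omega|w|^{q}\,dx\Big)s^{q}+\tfrac{|\mu|}{p}\Big(\max_{w\in S_k}\int_\Omega|w|^{p}\,dx\Big)s^{p},$$
where the minimum is strictly positive because $S_k$ is compact and $Y_k$ finite-dimensional. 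As $1<q<2<p$, the $s^{q}$-term dominates near the origin, so the right-hand side is negative for all small $s>0$; choosing $r_k\in(0,\tau_k)$ small enough (and, if necessary, below the $\rho_k$ produced next) yields $\max_{u\in Y_k,\|u\|=r_k}J_{\lambda,\mu}(u)<0$.

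For $(i)$ and $(iii)$ I would split according to $p$. If $2<p<4$, then $q,\,p/2<2$ and Young's inequality absorbs both lower-order terms, $A_kt^{q}\le\tfrac14 t^2+CA_k^{2/(2-q)}$ and $B_kt^{p/2}\le\tfrac14 t^2+C'B_k^{4/(4-p)}$, so that $h_k(t)\ge-CA_k^{2/(2-q)}-C'B_k^{4/(4-p)}\to0$ for every $t\ge0$; together with $J_{\lambda,\mu}(0)=0$ this gives $(iii)$, while $h_k(t)\to+\infty$ lets me fix $\rho_k$ large with $h_k(\rho_k)\ge0$ for $k$ large, giving $(i)$. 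If $p>4$ the convex term is superquadratic and cannot be absorbed; here I calibrate $\rho_k:=(4B_k)^{-2/(p-4)}\to\infty$, which forces $\tfrac14 t^2-B_kt^{p/2}\ge0$ exactly on $[0,\rho_k]$. Then $h_k(\rho_k)=\tfrac14\rho_k^2-A_k\rho_k^{q}\ge0$ for $k$ large (since $\rho_k^{2-q}\to\infty$ while $A_k\to0$), giving $(i)$; and, absorbing only the concave term, $h_k(t)\ge\tfrac14 t^2-CA_k^{2/(2-q)}-B_kt^{p/2}\ge-CA_k^{2/(2-q)}$ on $[0,\rho_k]$, so with $J_{\lambda,\mu}(0)=0$ one gets $(iii)$. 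In either case $r_k<\rho_k$ is secured by shrinking $r_k$.

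I expect the regime $p>4$ to be the main obstacle: the superquadratic term $B_kt^{p/2}$ destroys the global coercivity of $h_k$, so $\rho_k$ cannot be chosen freely. The calibration $\rho_k=(4B_k)^{-2/(p-4)}$, which guarantees $\tfrac14 t^2\ge B_kt^{p/2}$ throughout $[0,\rho_k]$, is precisely what reconciles the two competing demands — positivity of $J_{\lambda,\mu}$ on the sphere $\|u\|=\rho_k$ and the vanishing of its infimum over the whole ball $\|u\|\le\rho_k$ — and the verification that the residual concave dip has depth $O\big(A_k^{2/(2-q)}\big)\to0$ should be the crux of the argument.
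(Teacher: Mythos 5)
Your treatment of items $(i)$ and $(iii)$ is correct where it applies, and it takes a genuinely different route from the paper's. The paper shrinks the spheres: for $p\geq 4$ it calibrates $\rho_{k}=(4\lambda\theta_{q,k}^{q}/q)^{1/(2-q)}\to 0$ (and similarly for $2<p<4$, with $\gamma=\min\{q,p/2\}$ and $\eta_{k}=\max\{\theta_{q,k},\theta_{p/2,k}\}$), so that on $\|u\|=\rho_{k}$ the quadratic term exactly cancels the concave term once the convex term has been absorbed on small balls; item $(iii)$ is then immediate because the dip inside the ball has depth at most $\frac{\lambda}{q}\theta_{q,k}^{q}\rho_{k}^{q}\to 0$. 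You instead keep $\rho_{k}$ bounded away from zero (fixed for $2<p<4$, and $\rho_{k}=(4B_{k})^{-2/(p-4)}\to\infty$ for $p>4$) and recover $(iii)$ from the Young-absorption constants $O(A_{k}^{2/(2-q)})+O(B_{k}^{4/(4-p)})\to 0$. Since the Dual Fountain Theorem in \cite{Wil} only requires $(i)$--$(iii)$ for $k$ large and imposes no smallness on $\rho_{k}$, your geometry serves the application in Theorem \ref{teor3}$(ii)$ just as well. Item $(ii)$ is essentially the paper's argument; your bound $|f(t)|\geq f'(1)|t|$ for $|t|\leq 1$, obtained from concavity of $f$ on $(0,\infty)$, is a clean substitute for the paper's appeal to Lemma \ref{prop1}$(iii)$. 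One small caveat there: when $p>2^{\ast}$ the quantity $\max_{w\in S_{k}}\int_{\Omega}|w|^{p}dx$ is finite only because $Y_{k}$ is spanned by $L^{\infty}$ functions; the paper sidesteps this by using $|ru|<1$ to bound $|ru|^{p}\leq |ru|^{2}$ and then invoking the Sobolev embedding.

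The genuine gap is the case $p=4$, which the proposition does cover (the standing assumption is only $\max\{2,q\}<p<22^{\ast}$) and which the paper's proof handles explicitly (``and $k$ large enough if $p=4$''). Your case split is $2<p<4$ and $p>4$, and both of your key formulas degenerate at $p=4$: the Young exponent $B_{k}^{4/(4-p)}$ and the calibration $\rho_{k}=(4B_{k})^{-2/(p-4)}$ are undefined there (the latter also degenerates when $\mu=0$, i.e.\ $B_{k}=0$, though that case is trivial). The repair is short and should be stated: for $p=4$ the convex term in $h_{k}$ is $B_{k}t^{2}$ with $B_{k}\to 0$ by \eqref{coisa}, so for $k$ large it is absorbed directly, e.g.\ $B_{k}\leq 1/8$ gives
\begin{equation*}
h_{k}(t)\geq \frac{3}{8}t^{2}-A_{k}t^{q},
\end{equation*}
after which the argument of your $2<p<4$ case goes through verbatim with only the concave term left to absorb.
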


\begin{proof} 

$(i)$  Let us consider $p\geq 4$. Since $1<q<2$, by Lemma \ref{prop1}$(iv)$, \eqref{fini} and \eqref{sup1}, we get
\begin{eqnarray}\nonumber\label{minino}
J_{\lambda, \mu}(u)&\geq &\frac{1}{2}\|u\|^{2}-\frac{\lambda}{q}\int_{\Omega}\left|u\right|^{q}dx-\frac{|\mu|}{p}(8/\alpha^{2})^{p/4}\int_{\Omega}\left|u\right|^{p/2}dx\\
&\geq&\frac{1}{2}\|u\|^{2}-\frac{\lambda}{q}\theta_{q, k}^{q}\|u\|^{q}-\frac{|\mu|}{p}(8/\alpha^{2})^{p/4}\theta_{p/2, k}^{p/2}\|u\|^{p/2}, 
\end{eqnarray}
for all $u\in Z_{k}$. If $p\geq 4$, there exists $\delta>0$ small enough, such that 
\begin{equation}\label{eqcc3}
\frac{|\mu|}{p}(8/\alpha^{2})^{p/4}\theta_{p/2, k}^{p/2}\left\|u\right\|^{p/2}\leqslant\frac{1}{4}\left\|u\right\|^{2}, 
\end{equation}
for all $u\in Z_{k}$ with $\|u\|\leq \delta$ (and $k$ large enough if $p=4$). Thus, by choosing 
$$
\rho_{k}=(4\lambda\theta_{q, k}^{q}/q)^{1/(2-q)},
$$ 
we have $(1/4)\rho_{k}^{2}=(\lambda/q)\theta_{q, k}^{q}\rho_{k}^{q}$. Consequently, $\rho_{k}\to0$ as $k\to\infty$ and, therefore, there exists $k_{0}>0$ satisfying $\rho_{k}\leq \delta$ for all $k\geq k_{0}$. Finally, by (\ref{eqcc3})
\begin{equation}\label{eqccx4}
J_{\lambda, \mu}(u)\geq\frac{1}{4}\left\|u\right\|^{2}-\frac{\lambda}{q}\theta_{q, k}^{q}\left\|u\right\|^{q}=0
\end{equation}
for all $u\in Z_{k}$, $k\geqslant k_{0}$, with $\left\|u\right\|=\rho_{k}$. On the other hand, if $2<p<4$, we conclude from \eqref{minino} that
\begin{equation}\label{minina}
J_{\lambda, \mu}(u)\geq \frac{1}{2}\|u\|^{2}-\left[\frac{\lambda}{q}+\frac{|\mu|}{p}(8/\alpha^{2})^{p/4}\right]\eta_{k}^{\gamma}\|u\|^{\gamma},
\end{equation}
for all $u\in Z_{k}$ with $\|u\|<1$, $1<\gamma:=\min\{q, p/2\}<2$, $\eta_{k}:=\max\{\theta_{q, k}, \theta_{p/2, k}\}$ and $k\geq k_{0}$. Thus, by choosing
$$
\rho_{k}=\left\{2[\lambda/q+|\mu|(8/\alpha^{2})^{p/4}/p]\eta_{k}^{\gamma}\right\}^{1/(2-\gamma)},
$$
with $k\geq k_{0}$, the result follows.

\medskip

$(ii)$  By Lemma \ref{prop1}$(iii)$, there exists $\varepsilon>0$ such that
$$
|f(s)|\geq \varepsilon|s|,
$$
for all $|s|\leq 1$. Thus, 
$$
J_{\lambda, \mu}(u)\leq \frac{1}{2}\left\|u\right\|^{2}-\frac{\lambda}{q}\varepsilon^{q}\int_{[|u|\leq 1]}\left|u\right|^{q} dx+\frac{|\mu|}{p}\int_{\Omega}\left|f(u)\right|^{p}dx.
$$
By the second part of Lemma \ref{lema2}$(i)$ and Lemma \ref{prop1}$(iv)$, we have
$$
J_{\lambda, \mu}(r u)\leq \frac{1}{2}r^{2}-\frac{\lambda}{q}\varepsilon^{q}\int_{\Omega}\left|r u\right|^{q} dx+\frac{|\mu|}{p}\int_{\Omega}\left|r u\right|^{p}dx,
$$
for all $u\in S_{k}$ and $0<r<\tau_{k}$. Since $Y_{k}$ has finite dimension, there exists $\zeta_{k}(q)>0$ such that
$$
J_{\lambda, \mu}(r u)\leq \frac{1}{2}r^{2}-\frac{\lambda}{q}\varepsilon^{q}\zeta_{k}(q)r^{q}+\frac{|\mu|}{p}\int_{\Omega}\left|r u\right|^{2}dx,
$$
for all $u\in S_{k}$ and $0<r<\tau_{k}$, where in the last installment we use the fact that $p>2$. By Sobolev embeddings
$$
J_{\lambda, \mu}(r u)\leq \frac{1}{2}r^{2}-\frac{\lambda}{q}\varepsilon^{q}\zeta_{k}(q)r^{q}+\frac{|\mu|}{p}C_{1}r^{2}.
$$
Thence,
$$
J_{\lambda, \mu}(r u)\leq \left(\frac{1}{2}+\frac{|\mu|}{p}C_{1}\right)r^{2}-\frac{\lambda}{q}\varepsilon^{q}\zeta_{k}(q)r^{q},
$$
for all $0<r<\min\{1, \rho_{k}, \tau_{k}\}$. Since $1<q<2$, by choosing 
$$
0<r_{k}<\min\{1, \tau_{k}, \rho_{k}, [\lambda \varepsilon^{q}\zeta_{k}(q)/q(1/2+|\mu|C_{1}/p)]^{1/(2-q)}\},
$$ 
the item is proven. 

%However, if $q=2$, it follows that
%$$
%J_{\lambda, \mu}(r_{k} u)\leq \left[\frac{1}{2}+\frac{|\mu|}{p}C_{1}-\frac{\lambda}{2}C^{2}\zeta_{k}(2)\right]r_{k}^{2}<0,
%$$ 
%for all $0<r_{k}<\min\{1, \tau_{k}\}$, $\lambda>\lambda_{k}:=[1/2+(|\mu|/p)(8/\alpha^{2})^{p/4}C_{1}](2/\zeta_{k}(2)C^{2})$ and $u\in S_{k}$. 
%On the other hand, if $\mu>0$, then
%$$
%J_{\lambda, \mu}(u)\leq \frac{1}{2}\left\|u\right\|^{2}-\frac{\lambda}{q}C^{q}\int_{[|u|<1]}\left|u\right|^{q} dx-\frac{\mu}{p}C^{p}\int_{[|u|<1]}\left|u\right|^{p}dx.
%$$
%Again, by Lemma \ref{lema2}$(ii)$, for each $u\in S_{k}$, there exist positive constants $\tau_{k}, \zeta_{k}(q), \zeta_{k}(p)$, going to zero as $k$ goes to infinity, such that 
%\begin{eqnarray*}
%J_{\lambda, \mu}(\rho_{k} u)&\leq& \frac{1}{2}\rho_{k}^{2}-\frac{\lambda}{q}C^{q}\rho_{k}^{q}\int_{[|\rho_{k} u|< 1]}\left|u\right|^{q} dx-\frac{\mu}{p}C^{p}\rho_{k}^{p}\int_{|\rho_{k} u|< 1]}\left|u\right|^{p}dx\\
%&\leq & \frac{1}{2}\rho_{k}^{2}-\frac{\lambda}{q}C^{q}\zeta_{k}(q)\rho_{k}^{q}-\frac{\mu}{p}C^{p}\zeta_{k}(p)\rho_{k}^{p},
%\end{eqnarray*}
%where $\rho_{k}$ was chosen in $(0, \tau_{k})$. Since $1<q\leq 2<p$, we have
%$$
%J_{\lambda, \mu}(\rho_{k} u)\leq \left[\frac{1}{2}-\frac{\lambda}{q}C^{q}\zeta_{k}(q)\right]\rho_{k}^{q}-\frac{\mu}{p}C^{p}\zeta_{k}(p)\rho_{k}^{p}<0,
%$$
%for all $\lambda>\lambda_{k}:=q/2\zeta_{k}(q)C^{q}$ and $u\in S_{k}$.
%
%Therefore, the result follows. 

\medskip

$(iii)$ By \eqref{eqccx4} and \eqref{minina}, we conclude that
$$
o_{k}(1)\leq b_{k}:=\inf_{u\in Z_{k}, \left\|u\right\|\leq \rho_{k}}J_{\lambda, \mu}(u)\leq J_{\lambda, \mu}(0)=0,
$$
where, $o_{k}(1)\to0$ as $k\to\infty$. Consequently, $b_{k}\to 0$ as $k\to\infty$.
\end{proof}

$\square$

\medskip

{\bf Proof of Theorem \ref{teor3}$(i)$:}

\medskip

Since $J_{\lambda, \mu}$ is an even functional, the first part of Theorem \ref{teor3}$(i)$ is a direct consequence of Fountain Theorem in \cite{Wil} and Propositions \ref{prop30}$(ii)$ and \ref{prop10}. To prove the second part, observe that if $1<q<2$, it follows from $\mu>0$, Lemma \ref{prop1}$(iv)$, \eqref{fini} and Sobolev embeddings, that
$$
J_{\lambda, \mu}(u)\geq \frac{1}{2}\|u\|^{2}-\frac{|\lambda|}{q}C_{1}\|u\|^{q}-\frac{\mu}{p}(8/\alpha^{2})^{p/4}\theta_{p/2, m}^{p/2}\|u\|^{p/2},
$$
for all $u\in Z_{m}$. On the other hand, if $2\leq q<4$, it follows from $\mu>0$, \eqref{fini} and Sobolev embeddings, that
$$
J_{\lambda, \mu}(u)\geq \frac{1}{2}\|u\|^{2}-\frac{|\lambda|}{q}(8/\alpha^{2})^{q/4}C_{2}\|u\|^{q/2}-\frac{\mu}{p}(8/\alpha^{2})^{p/4}\theta_{p/2, m}^{p/2}\|u\|^{p/2},
$$
for all $u\in Z_{m}$. Consequently,
$$
J_{\lambda, \mu}(u)\geq \frac{1}{2}\|u\|^{2}-\frac{|\lambda|}{q}C_{3}\|u\|^{\alpha(q)}-\frac{\mu}{p}(8/\alpha^{2})^{p/4}\theta_{p/2, m}^{p/2}\|u\|^{p/2},
$$
where $\alpha:(1, 4)\to [1, 2)$ is give by $\alpha(s)=s$ if $1<s<2$ and $\alpha(s)=s/2$ if $2\leq s<4$. Thence, there exists $R_{\ast}$ large enough such that 
$$
\frac{1}{4}\|u\|^{2}\geq \frac{|\lambda|}{q}C_{3}\|u\|^{\alpha(q)},
$$
for all $u\in Z_{m}$ with $\|u\|\geq R_{\ast}$. Since $p<4$,
$$
J_{\lambda, \mu}(u)\geq \left[\frac{1}{4}-\frac{\mu}{p}(8/\alpha^{2})^{p/4}\theta_{p/2, m}^{p/2}\right]\|u\|^{p/2},
$$
for all $u\in Z_{m}$ with $\|u\|\geq \max\{R_{\ast}, 1\}$. Observe that there exists $m_{0}>0$ such that 
$$
\frac{1}{4}>\frac{\mu}{p}(8/\alpha^{2})^{p/4}\theta_{p/2, m}^{p/2},
$$
for all $m\geq m_{0}$. By choosing $r_{m}=\max\{R_{\ast}, m\}$, we have
\begin{equation}\label{explo}
\inf_{u\in Z_{m}, \|u\|=r_{m}}J_{\lambda, \mu}(u)\to\infty \ \mbox{as} \ m\to\infty.
\end{equation} 
Finally, by items $(iv)$ and $(v)$ of Lemma \ref{prop1} and \eqref{fini}, there exists $C>0$ such that
$$
J_{\lambda, \mu}(\rho u)\leq \frac{\rho^{2}}{2}+\frac{|\lambda|}{q}C \rho^{\alpha(q)}\int_{\Omega}|u|^{\alpha(q)}dx-\frac{\mu}{p}f(1)^{p}\rho^{p/2}\int_{[|\rho u|>1]}|u|^{p/2}dx,
$$
for all $u\in S_{m}$. It follows from Lemma \ref{lema2}$(ii)$ and Sobolev embedding that there exists $\alpha_{m}, \beta_{m}(p/2)>0$ such that
$$
J_{\lambda, \mu}(\rho_{m} u)\leq \frac{\rho_{m}^{2}}{2}+\frac{|\lambda|}{q}C_{1} \rho_{m}^{\alpha(q)}-\frac{\mu}{p}f(1)^{p}\beta_{m}(p/2)\rho_{m}^{p/2},
$$
for some $\rho_{m}>\max\{\alpha_{m}, r_{m}\}$ and for all $u\in S_{m}$. Therefore, there exists $\mu_{m}>0$ such that
\begin{equation}\label{tal}
\max_{u\in Y_{m}, \|u\|=\rho_{m}}J_{\lambda, \mu}(u)\leq 0,
\end{equation}
for all $\mu>\mu_{m}$. To finish the proof, let us define
$$
 B_{m}=\{u\in Y_{m}: \|u\|\leq \rho_{m}\},
$$ 
$$
\Gamma_{m}=\{\gamma\in C(B_{m}, H_{0}^{1}(\Omega)): \gamma \ \mbox{is odd and} \ \gamma_{|_{\partial B_{m}}}=id\}
$$
and
$$
c_{m}=\inf_{\gamma\in \Gamma_{m}}\max_{u\in B_{m}}J_{\lambda, \mu}(\gamma(u)).
$$
By definition of $c_{m}$ and Lemma 3.4 in \cite{Wil}, we have
\begin{equation}\label{tico}
\infty>c_{m}\geq \inf_{u\in Z_{m}, \|u\|=r_{m}}J_{\lambda, \mu}(u),
\end{equation}
for all $m$. On the other hand, by \eqref{explo}, we conclude that 
$$
\inf_{u\in Z_{m}, \|u\|=r_{m}}J_{\lambda, \mu}(u)>0,
$$
for all $m\geq m_{0}$. It is also a consequence of \eqref{explo} and \eqref{tico} that given $k\in\N$, there exists $m(k)>m_{0}$ with $k\leq m(k)-m_{0}$, such that we have at least $k$ different numbers $c_{j}$ when $m_{0}\leq j\leq m(k)$. Thus, by \eqref{tal} and Theorem 3.5 in \cite{Wil}, there exist $\mu_{k}:=\mu_{m(k)}>0$ and a $(PS)_{c_{j}}$-sequence for $J_{\lambda, \mu}$, for each $m_{0}\leq j\leq m(k)$, whenever $\mu>\mu_{k}$. Finally, by Proposition \ref{prop30}$(ii)$, follows that the numbers $c_{j}$ are critical points of $J_{\lambda, \mu}$ as $\mu>\mu_{k}$.
$\square$

\medskip

{\bf Proof of Theorem \ref{teor3}$(ii)$:}

\medskip

Since $J_{\lambda, \mu}$ is an even functional, the proof of first part of Theorem \ref{teor3}$(ii)$ follows from Dual Fountain Theorem in \cite{Wil} and Propositions \ref{prop30}$(ii)$ and \ref{prop20}. To prove the second part, note that, since $2\leq q<4$ and $\lambda>0$, it follows by \eqref{fini} and Sobolev embeddings, that
$$
J_{\lambda, \mu}(u)\geq \frac{1}{2}\|u\|^{2}-\frac{\lambda}{q}(8/\alpha^{2})^{q/4}\theta_{q/2, m}^{q/2}\|u\|^{q/2}-\frac{|\mu|}{p}(8/\alpha^{2})^{p/4}\theta_{p/2, m}^{p/2}\|u\|^{p/2},
$$
for all $u\in Z_{m}$. Thus, for $m$ large enough, we have $0<\eta_{m}:=\max\{\theta_{q/2, m}, \theta_{p/2, m}\}<1$ and 
\begin{equation}\label{iei}
J_{\lambda, \mu}(u)\geq \frac{1}{2}\|u\|^{2}-\left(\frac{\lambda}{q}+\frac{|\mu|}{p}\right)(8/\alpha^{2})^{q/4}\eta_{m}^{q/2}\|u\|^{q/2},
\end{equation}
for all $u\in Z_{m}$ with $\|u\|<1$. By choosing $\rho_{m}=\left[2\left(\lambda/q+|\mu|/p\right)(8/\alpha^{2})^{q/4}\eta_{m}^{q/2}\right]^{2/(4-q)}$, it follows that for $m\geq m_{\ast}$, with $m_{\ast}$ large enough
\begin{equation}\label{posis}
\inf_{u\in Z_{m}, \|u\|=\rho_{m}}J_{\lambda, \mu}(u)\geq 0.
\end{equation}

\medskip

On the other hand, by Lemma \ref{prop1}$(iii)$ and \eqref{fini}
$$
J_{\lambda, \mu}(r u)\leq \frac{r^{2}}{2}-\frac{\lambda}{q}\varepsilon^{q} \int_{[|r u|\leq 1]}|ru|^{q}dx+\frac{|\mu|}{p}(8/\alpha^{2})^{p/4}r^{p/2}\int_{\Omega}|u|^{p/2}dx,
$$
for all $u\in S_{m}$. It follows from Lemma \ref{lema2}$(i)$ that there exists $\tau_{m}>0$ such that
$$
J_{\lambda, \mu}(r_{m} u)\leq \frac{r_{m}^{2}}{2}-\frac{\lambda}{q}\varepsilon^{q}r_{m}^{q}\int_{\Omega}|u|^{q}dx+\frac{|\mu|}{p}(8/\alpha^{2})^{p/4}r_{m}^{p/2}\int_{\Omega}|u|^{p/2}dx,
$$
for some $0<r_{m}<\min\{\tau_{m}, \rho_{m}\}$ fixed and for all $u\in S_{m}$. Despite $q$ can be greater than $2^{\ast}$ when the dimension $N$ is large enough, it is a consequence of definition of $Y_{m}$ that $Y_{m}\subset L^{\infty}(\Omega)$ and, therefore, $|.|_{q}$ defines a norm in $Y_{m}$. Since $Y_{m}$ has finite dimension,
$$
J_{\lambda, \mu}(r_{m} u)\leq \frac{r_{m}^{2}}{2}-\frac{\lambda}{q}\varepsilon^{q}r_{m}^{q}\zeta_{m}(q)+\frac{|\mu|}{p}(8/\alpha^{2})^{p/4}C_{1}r_{m}^{p/2},
$$
for some $\zeta_{m}(q)>0$. Therefore, there exists $\lambda_{m}>0$ such that
\begin{equation}\label{tals}
b_{m}:=\max_{u\in Y_{m}, \|u\|=r_{m}}J_{\lambda, \mu}(u)< 0,
\end{equation} 
for all $\lambda>\lambda_m$.

\medskip

Finally, by \eqref{iei}, we conclude that
$$
o_{m}(1)\leq \inf_{u\in Z_{m}, \left\|u\right\|\leq \rho_{m}}J_{\lambda, \mu}(u)\leq J_{\lambda, \mu}(0)=0,
$$
where, $o_{m}(1)\to0$ as $m\to\infty$. Showing that 
\begin{equation}\label{converg}
d_{m}:=\inf_{u\in Z_{m}, \left\|u\right\|\leq \rho_{m}}J_{\lambda, \mu}(u)\to 0 \ \mbox{as} \ m\to\infty.
\end{equation}

To finish the proof, for each $t\geq m\geq m_{\ast}$, we are going to apply the Theorem 3.5 in \cite{Wil} to the functional $-J_{\lambda, \mu}$ on $Y_{t}$, for this, let us define:
 $$
 Z_{m}^{t}=\oplus_{j=m}^{t} X_{j},
 $$ 
 $$
 B_{m}^{t}=\{u\in Z_{m}^{t}: \|u\|\leq \rho_{m}\},
 $$ 
 $$
 \Gamma_{m}^{t}=\{\gamma\in C(B_{m}^{t}, Y_{m}): \gamma \ \mbox{is odd and} \ \gamma_{|_{\partial B_{m}^{t}}}=id\}
 $$
 and
 $$
 c_{m}^{t}=\sup_{\gamma\in \Gamma_{m}^{t}}\min_{u\in B_{m}^{t}}J_{\lambda, \mu}(\gamma(u)).
 $$
By definition of $c_{m}^{t}$ and Lemma 3.4 in \cite{Wil}, we have
\begin{equation}\label{nivel1}
d_{m}<c_{m}^{t}\leq b_{m},
\end{equation} 
for all $t\geq m\geq m_{\ast}$. Therefore, up to a subsequence, there exists 
\begin{equation}\label{defesa}
c_{m}\in [d_{m}, b_{m}]
\end{equation} 
such that 
\begin{equation}\label{conte}
c_{m}^{t}\to c_{m} \ \mbox{as} \ t\to\infty.
\end{equation} 
From \eqref{tals}, \eqref{converg} and \eqref{defesa}, given $k\in\N$, there exist $m(k)$ with $k<m(k)-m_{\ast}$ and $\lambda_{k}:=\lambda_{m(k)}>0$ such that we have at least $k$ different numbers $c_{m}$ as $m_{\ast}\leq m\leq m(k)$, whenever $\lambda>\lambda_{k}$. Thus, by Theorem 3.5 in \cite{Wil}, for each $m_{\ast}\leq m\leq m(k)$, there exists $u_{t}\in Y_{t}$ such that
\begin{equation}\label{palais}
c_{m}^{t}-2/t\leq J_{\lambda, \mu}(u_{t})\leq c_{m}^{t}+2/t \ \mbox{and} \ \|(J_{\lambda, \mu}|_{Y_{t}})'(u_{t})\|\leq 8/t,
\end{equation}
whenever $\lambda>\lambda_{k}$. Consequently, by \eqref{conte} and \eqref{palais}, up to a subsequence, $\{u_{t}\}$ is a $(PS)^{\ast}_{c_{m}}$ sequence. By Proposition \ref{prop30}$(ii)$, $c_{m}$ is a critical point of $J_{\lambda, \mu}$ for all $m_{\ast}\leq m\leq m(k)$. The result follows.
$\square$

\medskip

{\bf Proof of Theorem \ref{teor3}$(iii)$:} It is sufficient to argue exactly like in the proof of the second part of Theorem \ref{teor3}$(ii)$ and use Proposition \ref{prop30}$(i)$ instead of Proposition \ref{prop30}$(ii)$.

\medskip

$\square$

%%%%%%%%%%%%%%%%%%%%%%%%%%%%%%%%%%%%%%%%%%%%%%%%%%%%%%%%%%%%
%%%%%%%%%%%%%%%%%%%%%%%%%%%%%%%%%%%%%%%%%%%%%%%%%%%%%%%%%%%%
%%%%%%%%%%%%%%%%%%%%%%%%%%%%%%%%%%%%%%%%%%%%%%%%%%%%%%%%%%%%
%%%%%%%%%%%%%%%%%%%%%%%%%%%%%%%%%%%%%%%%%%%%%%%%%%%%%%%%%%%%
%%%%%%%%%%%%%%%%%%%%%%%%%%%%%%%%%%%%%%%%%%%%%%%%%%%%%%%%%%%%
%%%%%%%%%%%%%%%%%%%%%%%%%%%%%%%%%%%%%%%%%%%%%%%%%%%%%%%%%%%%
%%%%%%%%%%%%%%%%%%%%%%%%%%%%%%%%%%%%%%%%%%%%%%%%%%%%%%%%%%%%
%%%%%%%%%%%%%%%%%%%%%%%%%%%%%%%%%%%%%%%%%%%%%%%%%%%%%%%%%%%%
%%%%%%%%%%%%%%%%%%%%%%%%%%%%%%%%%%%%%%%%%%%%%%%%%%%%%%%%%%%%
%%%%%%%%%%%%%%%%%%%%%%%%%%%%%%%%%%%%%%%%%%%%%%%%%%%%%%%%%%%%
%%%%%%%%%%%%%%%%%%%%%%%%%%%%%%%%%%%%%%%%%%%%%%%%%%%%%%%%%%%%

\end{document}